\documentclass[11pt]{amsart}
\usepackage{amssymb}   
\usepackage{amsmath}
\usepackage{amsthm}

\usepackage{times}
\usepackage{color}
\newtheorem{Theorem}{Theorem}[section]
\newtheorem{Lemma}[Theorem]{Lemma}
\newtheorem{Proposition}[Theorem]{Proposition}
\newtheorem{Corollary}[Theorem]{Corollary}
\newtheorem{Conjecture}[Theorem]{Conjecture}

 \theoremstyle{definition}

\newtheorem{Remark}[Theorem]{Remark}
\newtheorem{Remarks}[Theorem]{Remarks}

\numberwithin{equation}{section}

\newcommand{\Pic}{\operatorname{Pic}}

\newcommand{\Sing}{\operatorname{Sing}}
\newcommand{\Sm}{\operatorname{Sm}}

\renewcommand{\O}{{\mathcal O}}

\newcommand{\p}{{\mathbb P}}

\newcommand{\codim}{\operatorname{codim}}

\newcommand{\ho}{\operatorname{h^0}}

\newcommand{\rk}{\operatorname{rk}}



\newcommand{\defect}{\operatorname{def}}

\def\leq{\leqslant}
\def\geq{\geqslant}
\def\deft{~}

\def\bibaut#1{{\sc #1}}
\def\S{Section\deft}
\def\phi{\varphi}
\def\ro[#1]{{\textcolor{red}{#1}}}
  
\begin{document}
\subjclass[2000]{14MXX, 14NXX, 14J45}

\begin{abstract}An embedded manifold is {\it dual defective} if its dual variety is not a hypersurface. Using the geometry of the variety of lines through a general point, we characterize scrolls among dual defective manifolds. This leads to an optimal bound for the dual defect, which improves results due to Ein. We also discuss our conjecture that every dual defective manifold with cyclic Picard group should be secant defective, of a very special type, namely a local quadratic entry locus variety.
\end{abstract}

\keywords{Fano manifold, covered by lines, dual and secant defective, scroll}
\title[On dual defective manifolds]{On dual defective manifolds}
\author[Paltin Ionescu]{Paltin Ionescu*}
\address{\sc Dipartimento di Matematica, Universit\` a degli Studi di Ferrara,\linebreak via Machiavelli, 35, 44121 Ferrara\\Italy\newline \noindent and \newline\indent Institute of Mathematics of the Romanian Academy, P.O. Box 1-764, 014700 Bucharest\\ Romania}
\email{Paltin.Ionescu@unife.it}
\author[Francesco Russo]{Francesco Russo}
\address{\sc Dipartimento di Matematica e Informatica\\
Universit\` a degli Studi di Catania\\
Viale A. Doria, 6\\
95125 Catania\\ Italy}
\email{frusso@dmi.unict.it}

\thanks{*Partially  supported  by the Italian Programme ``Incentivazione alla mobilit\`{a} di studiosi stranieri e italiani residenti all'estero" and by the grant PN-II-ID-PCE-2011-3-0288 (contract no.
132/05.10.2011)}
\maketitle
\section*{Introduction}

The present paper is a natural sequel to our previous work, see \cite{Ru, IR, IR2, IR3} and also \cite{BI}. Using the geometry of the variety of lines passing through the general point of an embedded projective manifold, we characterize scrolls among all dual defective manifolds, and relate the latter to some special secant defective ones. 

We consider $n$-dimensional irreducible non-degenerate complex projective manifolds $X\subset \p^{n+c}$. We call $X$ a {\it prime Fano manifold of index $i(X)$} if its Picard group is generated by the hyperplane section class $H$ and $-K_X=i(X)H$ for some positive integer $i(X)$. One consequence of Mori's work \cite{Mori} is that, for $i(X) \geq \frac{n+1}{2}$, $X$ is {\it covered by lines}, i.e.\ through each point of $X$ there passes a line, contained in $X$. Denote by $a$ the dimension of the variety of lines (belonging to  some irreducible covering family), that pass through a general point of $X$. The ``biregular part'' of Mori Theory (no singularities, no flips, ...), see \cite{Mori2, De, Ko}, provides the natural setting for understanding such manifolds. For instance, as first noticed in \cite{BSW}, when $a\geq \frac{n-1}{2}$, there is a Mori contraction of the covering family of lines. Moreover, its general fiber (which is still covered by lines) has cyclic Picard group, thus being a prime Fano manifold. For prime Fanos, the study of covering families of lines is nothing but the classical aspect in the theory of the {\it variety of minimal rational tangents}, developed by Hwang and Mok in a remarkable series of papers, see e.g.\ \cite{HM, HM2, HM3, Hwang}. 
Recall that $X$ is {\it dual defective} if its dual variety is not a hypersurface. Prime Fanos of high index other than complete intersections are quite rare. For instance, {\it dual defective} manifolds and some special but important {\it secant defective} ones provide such examples. Thus, when ${\rm Pic}(X)=\mathbb Z\langle H\rangle$, the class of {\it local quadratic entry locus} varieties, see \cite{Ru, IR, IR2} gives examples with $i(X)=\frac{n+\delta}{2}$, $\delta$ being the secant defect, while dual defective manifolds have $i(X)=\frac{n+k+2}{2}$, where $k$ is the dual defect, see \cite{Ein}.

Starting from an observation due to Buch \cite{Bu}, we show that when $a\geq n-c$, each line from the given irreducible covering family is part of the contact locus of a suitable hyperplane. We elaborate on this and prove that under some mild condition (which is, conjecturally, automatically fulfilled) for dual defective manifolds with cyclic Picard group each general line is a contact line. This allows us, using also \cite{BFS}, to characterize scrolls among all dual defective manifolds, see Theorem~\ref{scrollimit}.
As a consequence, we get an optimal bound on the dual defect, generalizing one of the main results in Ein's foundational papers \cite{Ein, Ein2}. We also give evidence for our conjecture asserting that dual defective manifolds with cyclic Picard group should be local quadratic entry locus varieties.

\section{Preliminaries}
\begin{enumerate}
\item[$(*)$] {\it Setting, terminology and notation}
\end{enumerate}

Throughout the paper we consider $X\subset \p^N$ an irreducible complex projective manifold
of dimension $n\geq 1$. $X$ is assumed to be non-degenerate and $c$ denotes its codimension, so that $N=n+c$. $X\subset\p^N$ is called {\it quadratic} if it is scheme theoretically an intersection of quadrics. 
For $x\in X$ we let $\mathbf{T}_xX$ denote the (affine) Zariski tangent space to $X$ at $x$, and write $T_xX$ for its projective closure in $\p^N$.
$H$ denotes a hyperplane section (class) of $X$. As usual, $K_X$ stands for the canonical class of $X$. Also, if $Y\subset X$ is a submanifold, we denote by $N_{Y/X}$ its normal bundle. For a vector bundle $E$, $\p(E)$ stands for its projectivized bundle, using Grothendieck's convention.

We let $SX\subset \p^N$ be the {\it secant variety} of $X$, that is the closure of the locus of secant lines. The {\it secant defect} of $X$ is the (nonnegative) number $\delta=\delta(X):=2n+1-\dim(SX)$. We say $X$ is {\it secant defective} when $\delta > 0$.

A secant defective manifold $X\subset \p^N$ is called a {\it local quadratic entry locus} manifold, LQELM for short, if any two general points $x, x' \in X$ belong to a $\delta$-dimensional quadric  $Q_{x,x'}\subset X$. See \cite{Ru, IR} for a systematic study of these special secant defective manifolds.

$X\subset \p^N$ is {\it conic-connected} if any two general points $x, x'\in X$ are contained in some conic $C_{x,x'}\subset X$. Clearly, any LQELM is also conic-connected. Classification results for conic-connected manifolds are obtained in \cite{IR2}, working in the general setting of rationally connected manifolds.

$X\subset \p^N$ is {\it dual defective}, DD for short, if its dual variety $X^*$ is not a hypersurface in the dual projective space $\p^{N*}$.
Its {\it dual defect} is the (positive) number $k:=N-1-\dim(X^*)$.

For a general point $x\in X$, we denote by $\mathcal L_x$ the (possibly empty) scheme of lines contained in $X$ and passing through $x$. We say that $X\subset \p^N$ is {\it covered by lines} if $\mathcal L_x$ is not empty for $x\in X$ a general point. We refer the reader to \cite{De, Ko} for standard useful facts about the deformation theory of rational curves; we shall use them implicitly in the simplest case, that is lines on $X$.

Recall that $X$ is Fano if $-K_X$ is ample. The {\it index} of $X$, denoted by $i(X)$, is the largest integer $j$ such that $-K_X = jA$ for some ample divisor $A$.

\begin{enumerate}
\item[$(**)$] {\it Some standard exact sequences}
\end{enumerate}
Let $V$ be a complex vector space of dimension $N+1$ such that $\p(V)=\p^N$.
Consider the restriction of the Euler sequence on $\p^N$ to $X$
\begin{equation}\label{Euler}
0\to \Omega^1_{\p^N|X}\to V\otimes \O_X(-1)\to \O_X\to 0,
\end{equation}
and the exact sequence on $X$
\begin{equation}\label{tangent}
0\to N_{X/\p^N}^*\to  \Omega^1_{\p^N|X}\to \Omega^1_X\to 0.
\end{equation}
From these exact sequences we deduce
\begin{equation}\label{princ}
0\to N_{X/\p^N}^*(1)\to V\otimes \O_X\to \mathcal P_X\to 0,
\end{equation}
and
\begin{equation}\label{E}
0\to \Omega^1_X(1)\to \mathcal P_X\to \O_X(1)\to 0,
\end{equation}
where $\mathcal P_X$ is the first jet bundle of $\O_X(1)$.

If $\mathbb G(n,N)$ denotes the Grassmannian of $n$-planes in $\p^N$, let $\gamma_X:X\to \mathbb G(n,N)$ be the Gauss map of $X$, associating to a point $x\in X$ the point of $\mathbb G(n,N)$
corresponding to the projective tangent space $T_xX$ to $X$ at $x$. 
By Zak's Theorem on Tangencies, the morphism $\gamma_X$ is
finite and birational, see \cite[I.2.8]{Zak}.

Let $\mathcal U$ be the universal quotient bundle  on $\mathbb G(n,N)$, which is a locally free sheaf of rank $n+1$. We have a natural surjection
$$V\otimes \O_{\mathbb G(n,N)}\to \mathcal U,$$ inducing the  surjection $$V\otimes\O_X\to \gamma_X^*(\mathcal U).$$ 
Then it is easy to see that 
$\mathcal P_X=\gamma_X^*(\mathcal U)$; so, for every closed point $x\in X$ we have $\p(\mathcal P_X\otimes k(x))=T_xX\subset\p(V)$.

The above surjection gives an embedding, over $X$, of $\p(\mathcal P_X)\to X$ into $\p(V)\times X\to X$ in such a way that the restriction $\pi_1$ of the projection $\p^N\times X\to \p^N$
to $\p(\mathcal P_X)$ maps $\p(\mathcal P_X)$ onto $TX=\bigcup_{x\in X}T_xX$. 

By \eqref{E} we get 
$$\gamma_X^*(\det(\mathcal U))=\det(\mathcal P_X)\simeq \omega_X\otimes\O_X(n+1).$$
 
The following simple lemma will provide useful.
\begin{Lemma}\label{norm}
Let $E$ be a spanned vector bundle on $X$ and let $C\subset \p(E):=\p$ be a smooth curve, orthogonal to the tautological line bundle of $\p$.
Let $C'$ be the projection of $C$ to $X$. The following relation between normal bundles holds:
$\deg(N_{C'/X})=\deg(N_{C/\p})+\deg(E|_{C'})$.
\end{Lemma}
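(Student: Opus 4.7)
The strategy is to identify $C$ with a section of the projective bundle $\p(E|_{C'})\to C'$ and then exploit the standard tangent/normal sequences for a section of a projective bundle. Write $p\colon\p(E)\to X$, $f\colon C\hookrightarrow \p$, and $g=p\circ f\colon C\to C'\subset X$; the implicit assumption is that $g$ is an isomorphism onto its smooth image, so $f$ corresponds to a section $\sigma\colon C'\to\p(E|_{C'})$. This section is given by a surjection $E|_{C'}\twoheadrightarrow L$, where $L:=\sigma^{*}\O_{\p}(1)$. The orthogonality hypothesis $C\cdot \O_{\p}(1)=0$ is then precisely the statement $\deg L=0$, and this will be the only place the hypothesis enters.

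The second step is a diagram chase. Restricting the relative tangent sequence $0\to T_{\p/X}\to T_{\p}\to p^{*}T_{X}\to 0$ to $C$, and comparing with the two normal bundle sequences
\[
0\to T_{C}\to T_{\p}|_{C}\to N_{C/\p}\to 0,\qquad
0\to T_{C'}\to T_{X}|_{C'}\to N_{C'/X}\to 0,
\]
and using $T_{C}\cong g^{*}T_{C'}$, one obtains (by taking degrees of all three sequences and eliminating $\deg T_{C}=\deg T_{C'}$)
\[
\deg N_{C/\p}\;=\;\deg T_{\p/X}|_{C}\;+\;\deg N_{C'/X}.
\]

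The third step computes $\deg T_{\p/X}|_{C}$. In Grothendieck's convention,
\[
\det T_{\p/X}\;\cong\;\O_{\p}(r)\otimes p^{*}(\det E)^{-1},\qquad r=\mathrm{rk}(E),
\]
as one sees from the relative Euler sequence (and which can be verified, e.g., on a Hirzebruch surface). Restricting to the section $\sigma$ yields
\[
\deg T_{\p/X}|_{C}\;=\;r\deg L\;-\;\deg (E|_{C'}).
\]
Invoking $\deg L=0$ from Step~1 gives $\deg T_{\p/X}|_{C}=-\deg(E|_{C'})$, and substituting into the identity from Step~2 yields the desired formula.

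The only real subtlety is Step~2—keeping track of the three tangent sequences and the fact that under $g$ the curves $C$ and $C'$ are identified. Step~3 is pure bookkeeping, but one has to be careful with the convention for $\p(E)$ (the formula for $\det T_{\p/X}$ changes sign conventions otherwise). The spannedness of $E$ plays no role in this degree computation; it is natural in the applications, where $\O_{\p}(1)$ is used to move $C$ inside $\p$.
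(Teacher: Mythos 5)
Your computation is correct and is essentially the paper's proof in tangent--bundle clothing: taking determinants of your three tangent sequences in Step~2 is exactly the adjunction formula applied to $C\subset\p$ and to $C'\subset X$, and your formula for $\det T_{\p/X}$ in Step~3 is the dual of the relative canonical class formula $K_{\p}=-\rk(E)L+\pi^*(\det(E)+K_X)$ that the paper invokes; the orthogonality hypothesis enters in the same single place.

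The one point you should fix is the closing remark that spannedness of $E$ plays no role. It is precisely what turns your ``implicit assumption'' into a fact: since $E$ is spanned, $\O_{\p}(1)$ is globally generated and the pair consisting of $\pi$ and the morphism defined by $\O_{\p}(1)$ embeds $\p(E)$ into $X\times\p^M$; orthogonality says $C$ is contracted to a point by the second factor, so $C$ projects isomorphically onto a smooth $C'$. This is the paper's opening observation, and without it neither the normal bundle $N_{C'/X}$, nor the identification of $C$ with a section of $\p(E|_{C'})\to C'$, nor the equality $\deg T_C=\deg T_{C'}$ used to cancel the tangent terms, is justified.
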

\begin{proof}
Observe first that $C$ projects isomorphically onto $C'$, in particular $C'$ is smooth. The result follows by combining the adjunction formula with the well-known expression for the canonical class of $\p$:

$K_{\p}=-\rk(E)L+\pi^*(\det(E)+K_X)$,
where $L$ is the tautological line bundle and $\pi$ the projection from $\p$ to $X$.
\end{proof}

\section{Manifolds covered by lines}

Let $X\subset \p^N$ be as in ($*$). 
Fix some irreducible component, say $\mathcal F$, of the Hilbert scheme of lines on $X\subset \p^N$, such that $X$ is covered by the lines in $\mathcal F$. 
Put $a=: \deg (N_{\ell/X})$, where $[\ell] \in \mathcal F$. 
For  $x\in X$ , let  $\mathcal{F}_x=\{[\ell] \in\mathcal F\mid x\in
\ell\}$. Note that for $x\in X$ general
and for $[\ell] \in\mathcal {F}_x$ we have  $a\geq 0$ and $a=
\dim_{[\ell]}(\mathcal{F} _x)$.
Moreover, we may view $\mathcal{F}_x$ as a closed subscheme of $\p((\mathbf T_xX)^*)\cong \p^{n-1}$.

When the dimension of $\mathcal{F}_x$ is large, the study of manifolds covered by lines is greatly simplified by the following two facts:

First, we may reduce, via a Mori contraction, to the case where the Picard group is cyclic; this is due to Beltrametti--Sommese--Wi\'sniewski, see \cite{BSW}.
Secondly, the variety $\mathcal{F}_x\subset \p^{n-1}$
inherits many of the good properties of $X\subset \p^N$; this is due to Hwang, see \cite{Hwang}. See \cite{BI} for an application of these principles.
\begin{Theorem}\label{reduction}
Assume $a\geq \frac {n-1}2$. Then the following results hold:
\begin{enumerate}
\item {\rm (\cite{BSW})} There is a Mori contraction, say ${\rm cont}_{\mathcal F} : X\to W$, of the lines from $\mathcal F$; let $F$ denote a general fiber of ${\rm cont}_{\mathcal F}$ and let $f$ be its dimension;

\item {\rm (\cite{Wisniewski})} ${\rm Pic}(F) = \mathbb{Z} \langle H_F\rangle$, $i(F)= a+2$ and $F$ is covered by the lines from $\mathcal F$ contained in $F$;

\item {\rm (\cite{Hwang})} $\mathcal {F}_x\subseteq \mathbb{P}^{f-1}$ is smooth irreducible non-degenerate. In particular, $F$ has only one maximal irreducible covering family of lines.
\end{enumerate}
\end{Theorem}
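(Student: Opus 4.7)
The plan is to establish the three parts in sequence, each being essentially a citation to a substantial result but fitting into a coherent strategy via the numerical invariants of a general line in $\mathcal{F}$.

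For part (1), I would first record that a general $[\ell]\in\mathcal{F}$ is free with $N_{\ell/X}\simeq\mathcal{O}(1)^{a}\oplus\mathcal{O}^{n-1-a}$, so by adjunction on $\ell\simeq\p^1$ we get $-K_X\cdot\ell=a+2\geq \tfrac{n+3}{2}$. The BSW argument then proceeds as follows: if $[\ell]$ fails to span an extremal ray of $\overline{NE}(X)$, one writes it as a sum of classes of rational curves of lower $-K_X$-degree and invokes the Ionescu--Wi\'sniewski dimension inequality for the locus swept out by deformations; the large value $-K_X\cdot\ell\geq \tfrac{n+3}{2}$ forces a contradiction with the fact that the family covers $X$. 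Once $[\ell]$ spans an extremal ray $R$, the Kawamata--Shokurov base point free theorem produces the contraction $\mathrm{cont}_{\mathcal{F}}:X\to W$.

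For part (2), I would apply Wi\'sniewski's inequality to $\mathrm{cont}_{\mathcal{F}}$: for a general fiber $F$ of dimension $f$ and exceptional locus $E$, one has $f+\dim E\geq n+\ell(R)-1$, where $\ell(R)=a+2$ is the length. Since $\mathcal{F}$ covers $X$, the exceptional locus is all of $X$, so $f\geq a+1$. Wi\'sniewski's theorem on contractions with long extremal rays then yields $\mathrm{Pic}(F)=\Z\langle H_F\rangle$ with $-K_F=(a+2)H_F$, i.e.\ $i(F)=a+2$. The covering statement follows because a general line in $\mathcal{F}$ meeting $F$ stays inside $F$ (as fibers of the contraction are connected and lines in $\mathcal{F}$ are numerically proportional to generators of $R$).

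For part (3), I would invoke Hwang's theory of the VMRT. The variety $\mathcal{F}_x$ sits in $\p((\mathbf{T}_xF)^*)\cong \p^{f-1}$ via the tangent direction map $[\ell]\mapsto [\mathbf{T}_x\ell]$. Smoothness of $\mathcal{F}_x$ at a general $[\ell]$ is Kebekus-type: the tangent map is an immersion because $N_{\ell/F}$ has no negative summand, and $\mathcal{F}_x$ is smooth at $[\ell]$ by generic smoothness of the evaluation morphism. Non-degeneracy comes from the Gauss-type argument of Hwang: if $\mathcal{F}_x$ lay in a hyperplane of $\p((\mathbf{T}_xF)^*)$, one would produce a nontrivial quotient of $\mathbf{T}_xF$ preserved under parallel transport along lines, contradicting $\rho(F)=1$ via Hwang--Mok's distribution-extension technique. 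Irreducibility, the part I expect to be the main obstacle, is Hwang's deep theorem: any two irreducible components of $\mathcal{F}_x$ would give rise to two independent minimal dominating families, whose tangent distributions would both be integrable by Cartan--Fubini-type rigidity, again incompatible with $\rho(F)=1$ and the Fano property. Uniqueness of the maximal covering family follows immediately from irreducibility of $\mathcal{F}_x$ at a general $x$.
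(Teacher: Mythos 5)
The paper offers no proof of this statement at all: Theorem~\ref{reduction} is presented purely as a package of quoted results, with each item attributed to \cite{BSW}, \cite{Wisniewski} and \cite{Hwang} respectively. So there is no ``paper's argument'' to match; what you have written is a reconstruction of the cited literature. For items (1) and (2) your reconstruction is essentially the standard one: a general $[\ell]$ is free with $N_{\ell/X}\simeq\O_{\p^1}(1)^{a}\oplus\O_{\p^1}^{\,n-1-a}$, so $-K_X\cdot\ell=a+2\geq\frac{n+3}{2}$, the cone theorem forces $[\ell]$ to span an extremal ray, and the contraction theorem gives ${\rm cont}_{\mathcal F}$; Wi\'sniewski's fiber-dimension inequality and his theorem on Mukai's conjecture then handle the general fiber. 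One numerical point is aimed in the wrong direction, though: to apply Wi\'sniewski's $b_2=1$ theorem to $F$ you need the index to be \emph{large relative to} $f$, namely $i(F)=a+2>\frac{f}{2}+1$, which follows from $f\leq n$ together with $a\geq\frac{n-1}{2}$; the inequality $f\geq a+1$ you extract from the fiber-dimension estimate is true (and gives $a\leq f-1$, needed for $\mathcal F_x\subseteq\p^{f-1}$ to make sense), but it is not what makes Wi\'sniewski's theorem applicable.

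The genuine gap is in your treatment of irreducibility in (3). Two irreducible components of $\mathcal F_x$ do \emph{not} give rise to two independent minimal dominating families --- $\mathcal F$ is a single irreducible component of the Hilbert scheme, fixed once and for all, and the question is only whether its fiber over a general point is connected; moreover Cartan--Fubini-type rigidity is an extension theorem for maps preserving VMRTs and says nothing about integrability of the tangent distributions of individual components. The actual argument (Hwang's ICTP notes, reproduced in \cite{BI}) is far more elementary and uses exactly the numerical hypothesis $a\geq\frac{n-1}{2}$: for $x$ general every line through $x$ is free, so $h^1(N_{\ell/F}(-1))=0$ and $\mathcal F_x$ is \emph{smooth} of pure dimension $a$; its irreducible components are therefore pairwise disjoint, yet any two of them, having dimension $a\geq\frac{f-1}{2}$ each inside $\p^{f-1}$, would be forced to intersect. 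Hence there is only one component. Your sketch of non-degeneracy (a proper linear span of $\mathcal F_x$ would define a distribution whose integrability contradicts $\rho(F)=1$) is the right circle of ideas, but the irreducibility step as you propose it would not go through and should be replaced by the dimension-count-plus-smoothness argument above.
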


We refer now to the setting in ($* *$). The following considerations are an elaboration of Lemma 1 from \cite{Bu}.

Consider a line $l\subset X$. Then $N_{l/X}\simeq\O_{\p^1}(a_1)\oplus\cdots\oplus\O_{\p^1}(a_{n-1})$ with $a_1\leq a_2\leq\cdots\leq a_{n-1}\leq 1$. Let $a(l)=\#\{i\mid a_i=1\}$.
If $l$ passes through a general point $x\in X$, then $a_1\geq 0$ and $\deg(N_{l/X})=a(l)$.

For every line $l\subset X$ we shall consider the following linear spaces:
\begin{equation}
\big\langle {\textstyle\bigcup\limits_{x\in l}}T_xX\big\rangle,
\end{equation}
which is the linear span of the union of the tangent spaces at points $x\in l$
and 
\begin{equation}
{\textstyle\bigcap\limits_{x\in l}}T_xX.
\end{equation}

By the discussion from ($* *$), we also have the equality
\begin{equation}\label{Tint}
\pi_1(\p((\mathcal P_X)_{|l}))= {\textstyle\bigcup\limits_{x\in l}}T_xX.
\end{equation}

Since we shall be interested in $\bigcup_{x\in l}T_xX$ and in the dimension of its linear span in $\p^N$, we shall analyze  $\mathcal P_{X|l}$,
where for the moment $l\subset X$ is an arbitrary line. From \eqref{princ} we deduce that $\mathcal P_X$  and $\mathcal P_{X|l}$ are generated by global
sections so that
\begin{equation}\label{splittingarbitrary}
\mathcal P_{X|l}\simeq \bigoplus\limits_{j=1}^{n+1-b_0(l)}\O_{\p^1}(b_j(l))\bigoplus \O_{\p^1}^{b_0(l)}
\end{equation}
with $b_j(l)>0$ for every $j=1,\ldots, n+1-b_0(l)$. 

Let $\Pi=\p^b=\p(U)\subset\p^N$. We have a surjection $V\to U$ inducing a surjection $V\otimes\O_X\to U\otimes\O_X$ and hence
an inclusion, over $X$,  $\p(U)\times X\subset\p(V)\times X$. Given a subvariety $Y\subseteq X$ we have that $\Pi\subset T_yX$ for every
$y\in Y$ if and only if the natural surjection $V\otimes\O_Y\to U\otimes \O_Y$ factorizes through $V\otimes \O_Y\to \mathcal P_{X|Y}$, that is if and only if there exists  a surjection $\mathcal P_{X|Y}\to U\otimes \O_Y$.
Thus for a line $l\subset X$ we obtain that if $\Pi=\p^b\subset T_xX$ for every $x\in l$, then $b\leq b_0(l)-1$.

\begin{Proposition}{\rm(cf. \cite[Lemma1]{Bu})}\label{contact} Let notation be as above and let $l\subset X$ be a line passing through a general point of $X$. Then:
\begin{enumerate}
\item 
$$\mathcal P_{X|l}\simeq\O_{\p^1}^{a(l)+2}\oplus\O_{\p^1}(1)^{n-1-a(l)}$$
and 
$$\dim\big(\big\langle {\textstyle\bigcup\limits_{x\in l}}T_xX\big\rangle\big)=N-\ho(N_{X/\p^N}^*(1)_{|l}).$$

In particular the variety $\bigcup_{x\in l}T_xX$ is isomorphic to a linear projection
of a cone with  vertex a linear space of dimension $a(l)+1$ over the Segre variety $\p^1\times \p^{n-2-a(l)}$
so that $$\dim\big(\big\langle {\textstyle\bigcup\limits_{x\in l}}T_xX\big\rangle\big)\leq\min\{N, 2n-1-a(l)\}.$$

\item $$\dim\big({\textstyle \bigcap\limits_{x\in l}}T_xX\big)=a(l)+1$$ and for $x,y\in l$ general we have
$$T_y\widetilde C_x={\textstyle\bigcap\limits_{z\in l}}T_zX,$$ where $\widetilde C_x$ is the irreducible component of the locus
of lines through $x$ to which $l$ belongs.

\item $\ho(N_{X/\p^N}(-1)_{|l})=N-a(l)-1$ and $\ho(N_{X/\p^N}^*(1)_{|l})\geq a(l)+c+1-n$.

\item The following conditions are equivalent:

\begin{itemize}
\item[(i)]
$\ho(N_{X/\p^N}^*(1)_{|l})= a(l)+c+1-n$;
\item[(ii)]
$\dim(\langle \bigcup_{x\in l}T_xX\rangle)= 2n-1-a(l)$;
\item[(iii)]
the splitting-type of the bundle $N_{X/\p^N}(-1)_{|l}$ is $(0,\ldots,0,1,\ldots,1)$.

\end{itemize}
\end{enumerate}
\end{Proposition}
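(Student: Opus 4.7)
The plan is to compute $\mathcal{P}_{X|l}$ explicitly from the exact sequences in $(**)$ and to deduce all four parts from this computation. From the subbundle sequence $0\to T_l\to T_X|_l\to N_{l/X}\to 0$ with $T_l=\O_{\p^1}(2)$ and $a_i\leq 1$, the Ext class lives in $\bigoplus H^1(\O_{\p^1}(2-a_i))=0$, so the sequence splits and $\Omega^1_X(1)|_l\cong \O_{\p^1}(-1)\oplus\O_{\p^1}^{a(l)}\oplus\O_{\p^1}(1)^{n-1-a(l)}$. Restricting \eqref{E} to $l$ then presents $\mathcal{P}_{X|l}$ as an extension of $\O_{\p^1}(1)$ by this bundle, whose class lies in $H^1(\Omega^1_X|_l)=H^1(\O_{\p^1}(-2))=\C$. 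A split extension would leave an $\O_{\p^1}(-1)$ summand in $\mathcal{P}_{X|l}$, contradicting the global generation of $\mathcal{P}_{X|l}$ deduced from \eqref{princ}; hence the class is nonzero and merges $\O_{\p^1}(-1)$ with an $\O_{\p^1}(1)$ into $\O_{\p^1}^{\oplus 2}$, yielding $\mathcal{P}_{X|l}\cong \O_{\p^1}^{a(l)+2}\oplus \O_{\p^1}(1)^{n-1-a(l)}$.

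The geometric assertions in (1) and (2) then follow quickly. The long exact cohomology of \eqref{princ}$|_l$ identifies hyperplanes in $\p^N$ containing every $T_xX$, $x\in l$, with $H^0(N_{X/\p^N}^*(1)|_l)$, so $\dim\langle\bigcup_x T_xX\rangle=N-h^0(N_{X/\p^N}^*(1)|_l)$; by \eqref{Tint}, $\bigcup_x T_xX$ is the image of $\p(\mathcal{P}_{X|l})$ under the tautological quotient, which via the splitting above is a linear projection into $\p^N$ of the cone with vertex $\p^{a(l)+1}$ over the Segre variety $\p^1\times\p^{n-2-a(l)}$ (naturally spanning $\p^{2n-a(l)-1}$). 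For (2), projective subspaces of $\p^N$ contained in every $T_xX$ correspond bijectively to trivial quotient bundles of $\mathcal{P}_{X|l}$; the maximal one is the $\O_{\p^1}^{a(l)+2}$ factor (any morphism $\O_{\p^1}(1)\to\O_{\p^1}$ on $\p^1$ is zero), giving $\dim\bigcap_x T_xX=a(l)+1$. The cone $\widetilde{C}_x$ with vertex $x$ has dimension $1+h^0(N_{l/X}(-1))=a(l)+1$, and projecting $\widetilde{C}_x$ from $x$ shows $T_y\widetilde{C}_x$ is the join of $\{x\}$ with the tangent space of the base at $\pi(y)$, hence constant along $l$ and contained in every $T_zX$; equal dimensions then force equality with $\bigcap_z T_zX$.

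For (3) and (4), dualize \eqref{princ}: from $0\to\mathcal{P}_X^*\to V^*\otimes\O_X\to N_{X/\p^N}(-1)\to 0$ and $\mathcal{P}_{X|l}^*\cong\O_{\p^1}^{a(l)+2}\oplus\O_{\p^1}(-1)^{n-1-a(l)}$ (with $h^0=a(l)+2$, $h^1=0$), long exact cohomology yields $h^0(N_{X/\p^N}(-1)|_l)=N-a(l)-1$, while cohomology of \eqref{princ}$|_l$ combined with $h^0(\mathcal{P}_{X|l})=2n-a(l)$ gives $h^0(N_{X/\p^N}^*(1)|_l)-h^1(N_{X/\p^N}^*(1)|_l)=a(l)+c+1-n$. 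This establishes both the inequality in (3) and the equivalence $(\mathrm{i})\Leftrightarrow h^1(N_{X/\p^N}^*(1)|_l)=0\Leftrightarrow(\mathrm{ii})$. Finally, $N_{X/\p^N}(-1)|_l=\bigoplus\O_{\p^1}(d_i)$ is globally generated (quotient of a trivial bundle), so $d_i\geq 0$; the vanishing $h^1(N_{X/\p^N}^*(1)|_l)=\sum\max(d_i-1,0)$ then forces $d_i\in\{0,1\}$, giving (iii). The most delicate step will be the cone/tangent-space identification in (2), which requires $\widetilde{C}_x$ to be smooth enough along $l$ for the projection-from-vertex argument to apply.
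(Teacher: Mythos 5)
Your argument is correct, and like the paper's proof it hinges entirely on determining the splitting type of $\mathcal P_{X|l}$; but you reach that splitting, and exploit it, by somewhat different routes. For the splitting itself, the paper computes $h^0(\mathcal P_{X|l}^*)=h^0(T_X(-1)_{|l})=a(l)+2$ from the dual of \eqref{E} and then concludes by the degree count $\deg \mathcal P_{X|l}=(K_X+(n+1)H)\cdot l=n-1-a(l)$, which forces all positive summands to be $\O_{\p^1}(1)$; you instead split $T_{X}|_l$ off $N_{l/X}$ (using $a_i\leq 1$), locate the extension class of \eqref{E}$|_l$ in the one-dimensional group $H^1(\O_{\p^1}(-2))$, and use global generation of $\mathcal P_X$ from \eqref{princ} to see that the class is nonzero, whence the $\O_{\p^1}(-1)$ and an $\O_{\p^1}(1)$ merge into $\O_{\p^1}^{\oplus 2}$. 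Both are sound; the paper's degree count is shorter and avoids the (correct, since the extension class lies entirely in the $\mathrm{Ext}^1(\O_{\p^1}(1),\O_{\p^1}(-1))$ component) merging argument. In (2) you obtain the lower bound $\dim\bigl(\bigcap_x T_xX\bigr)\geq a(l)+1$ directly from the maximal trivial quotient $\mathcal P_{X|l}\to \O_l^{a(l)+2}$ --- this is legitimate because $\mathrm{Hom}(\O_{\p^1}(1),\O_{\p^1})=0$ makes that quotient canonical, and a surjection of trivial bundles on $l$ is induced by a surjection of vector spaces $V\to U$, so it really defines a linear space inside every $T_xX$ --- whereas the paper derives the lower bound from $T_y\widetilde C_x\subseteq \bigcap_z T_zX$. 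You still need the cone argument for the identification $T_y\widetilde C_x=\bigcap_z T_zX$, and the smoothness issue you flag is resolved exactly as the paper implicitly does: take $l$ general among the rulings of $\widetilde C_x$ and $y$ general on $l$, so that $y$ is a smooth point and the tangent space of the cone is constant along the ruling. In (3)--(4) your bookkeeping via $\chi(N^*_{X/\p^N}(1)_{|l})=a(l)+c+1-n$ and $h^1(N^*_{X/\p^N}(1)_{|l})=\sum_i\max(d_i-1,0)$ is equivalent to the paper's degree count $\sum_{j>d_0(l)}d_j\geq c-d_0(l)$ with equality exactly in case (iii). No gaps.
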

\begin{proof}  From the exact sequence
$$0\to \O_l(-1)\to \mathcal P_{X|l}^*\to T_X(-1)_{|l}\to 0$$
and from $h^0(T_X(-1)_{|l})=a(l)+2$ we deduce $b_0(l)=h^0(\mathcal P_{X|l}^*)=a(l)+2$. Moreover
\begin{align*}n+1-b_0(l)\leq\sum_{j=1}^{n+1-b_0(l)}b_j(l)&=\deg(\mathcal P_{X|l})=(K_X+(n+1)H)\cdot l\\
&=-a(l)-2+n+1=n+1-b_0(l)\end{align*}
so that $b_j(l)=1$ for every $j=1,\ldots,n-1-a(l)$, proving the first assertion.

Let $\Pi=\bigcap_{x\in l}T_xX=\p^b$.  By the previous analysis we deduce $b\leq a(l)+1$. If $l$ passes through the general point $x\in X$,
consider $\widetilde C_x$, the irreducible component of the locus of lines through $x$ to which $l$ belongs, which is a cone
whose vertex contains $x$. We know that $\dim(\widetilde C_x)=a(l)+1$ and that for every $y\in l$ obviously $T_y\widetilde C_x\subset T_yX$.
Since $T_y\widetilde C_x$ does not depend on $y\in l$ for $y\in l$ general, we deduce that, for $x,y\in l$ general,
$T_y\widetilde C_x\subseteq\bigcap_{z\in l}T_zX.$
Therefore $a(l)+1\leq b$ which together with the previous inequality yields $b=a(l)+1$ and
\begin{equation}\label{intformula}
T_y\widetilde C_x={\textstyle \bigcap\limits_{z\in l}}T_zX
\end{equation}
for $x,y\in l$ general points. This proves (2).

The assertions from (3) follow easily from the exact sequences in 
($* *$).
The equivalence between (i) and (ii) from (4) is clear by the second assertion in (1).  Write $N_{X/\p^N}(-1)|_l\simeq \bigoplus\limits_{j=1}^{c}\O_{\p^1}(d_j(l))$, for non negative integers $d_j(l)$. Let $d_0(l)=\#\{i\mid d_i=0\}$ and let $d_1\leq\cdots \leq d_c$. 

We have the exact sequence
\begin{equation}\label{normal}
0\to N_{l/X}(-1)\to N_{l/\p^N}(-1)\simeq \O^{N-1}_l\to N_{X/\p^N}(-1)|_l\to 0,
\end{equation}
yielding
\begin{align*}
n-a(l)-1&=-\deg(N_{l/X}(-1))=\deg(N_{X/\p^N}(-1)|_l)\\&=\sum_{j=d_0(l)+1}^c d_j\geq c-d_0(l). \end{align*}

Moreover, equality holds in the last inequality if and only if condition (iii) from (4) is fulfilled. We deduce that
$h^0(N^*_{X/\p^N}(1)|_l)=d_0(l)\geq a(l)+c+1-n$. This shows the equivalence between (i) and (iii).
\end{proof}
\begin{Corollary}
Keeping the notation and hypotheses from the previous proposition, we have:
\begin{enumerate}

\item
If the splitting-type of the bundle $N_{X/\p^N}(-1)_{|l}$ is $(0,\ldots,0,1,\ldots,1)$, then $a(l)\geq n-1-c$;
\item 
If there is a line $l$ from a covering family such that $N_{X/\p^N}(-1)_{|l}$ is ample, then $a(l)\leq n-1-c$;
\item
If $X$ is quadratic, $n\geq c+2$ and there is a line $l$ from a covering family such that $N_{X/\p^N}(-1)_{|l}$ is ample, then $X$ is a complete intersection.

\end{enumerate}
\end{Corollary}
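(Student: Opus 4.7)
Plan. Parts (1) and (2) follow immediately from the splitting analysis carried out in the proof of Proposition~2.10(4). Writing $N_{X/\p^N}(-1)_{|l} = \bigoplus_{j=1}^c \O_{\p^1}(d_j)$ with $d_0(l) = \#\{j:d_j=0\}$, part (1) is obtained as follows: the assumed splitting type gives $\deg(N_{X/\p^N}(-1)_{|l}) = c - d_0(l)$, which must equal $n-1-a(l)$, so $d_0(l) = c+1+a(l)-n \geq 0$ yields $a(l) \geq n-1-c$. For (2), ampleness forces every $d_j \geq 1$, whence $d_0(l)=0$; by Proposition~2.10(3) we have $\ho(N^*_{X/\p^N}(1)_{|l}) = d_0(l) \geq a(l)+c+1-n$, and so $a(l) \leq n-1-c$.

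Part (3) is the substantive one. The plan is to use the quadratic hypothesis to extract $c$ quadrics cutting $X$ out as a complete intersection. Since $X$ is scheme-theoretically an intersection of quadrics, the composition of the generation map $H^0(\p^N,\I_X(2))\otimes\O_{\p^N}(-2)\twoheadrightarrow\I_X$ with the projection $\I_X\to\I_X/\I_X^2 = N^*_{X/\p^N}$ yields a surjection $H^0(\p^N,\I_X(2))\otimes\O_X \twoheadrightarrow N^*_{X/\p^N}(2)$. The goal is to show $N^*_{X/\p^N}(2)\cong\O_X^c$; once this is established, a generic $c$-dimensional subspace of $H^0(\p^N,\I_X(2))$ will cut $X$ out transversally, giving a complete intersection of $c$ quadrics.

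To establish this triviality, one first upgrades the inequality $a(l)\leq n-1-c$ from (2) to an equality $a(l) = n-1-c$ by extracting a converse lower bound from the quadratic hypothesis. This forces the splitting $N_{X/\p^N}(-1)_{|l} = \O_{\p^1}(1)^c$, so $N^*_{X/\p^N}(2)_{|l} = \O_l^c$ is trivial along $l$. Fiberwise triviality is then propagated from $l$ to all of $X$ via the covering family of lines, with the hypothesis $n\geq c+2$ supplying the necessary vanishing and rigidity for the global step. Together with the surjection above, this identification of $N^*_{X/\p^N}(2)$ with $\O_X^c$ selects $c$ quadrics in $H^0(\p^N,\I_X(2))$ whose images trivialize $N^*_{X/\p^N}(2)$, and hence cut out $X$ along a smooth scheme of the correct codimension.

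The principal obstacle is the converse inequality $a(l)\geq n-1-c$ under the quadratic hypothesis, likely obtained through a jet bundle or second-fundamental-form computation leveraging that $X$ is 2-regular, together with the globalization argument propagating triviality from $l$ to $X$. The hypothesis $n\geq c+2$ is expected to enter precisely at these two steps, ruling out low-dimensional degenerate configurations and guaranteeing the positivity needed to transport the trivialization of $N^*_{X/\p^N}(2)_{|l}$ to a global trivialization over $X$.
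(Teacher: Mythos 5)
Your treatment of parts (1) and (2) coincides with the paper's: (1) is the implication (iii)$\Rightarrow$(i) of Proposition~\ref{contact}(4) together with $d_0(l)\geq 0$, and (2) combines $\ho(N^*_{X/\p^N}(1)_{|l})=d_0(l)=0$ with the inequality of Proposition~\ref{contact}(3). One small point you pass over in (2): the hypothesis only gives \emph{some} line of a covering family on which $N_{X/\p^N}(-1)_{|l}$ is ample, whereas Proposition~\ref{contact} is stated for lines through a general point; as the paper notes, one first uses the openness of ampleness to replace $l$ by a general member of the family. This is minor.

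For part (3) the paper offers no self-contained argument --- it cites the proof of Theorem~3.8(3) in \cite{IR3} --- so your sketch must stand on its own, and it has a genuine gap at its central step. The first half is fine: since $X$ is quadratic, $N^*_{X/\p^N}(2)$ is spanned, so every line has splitting type $(0,\dots,0,1,\dots,1)$, and combining (1) with (2) gives $a(l)=n-1-c$ and $N^*_{X/\p^N}(2)_{|l}\simeq\O_l^{\,c}$ for the general line of the family. But the passage from this fiberwise triviality to a global isomorphism $N^*_{X/\p^N}(2)\simeq\O_X^{\,c}$ is asserted, not proved, and it is not automatic: a globally generated bundle that is trivial on the general member of a merely \emph{covering} family of rational curves need not be trivial (pull back a nontrivial globally generated bundle from the base of a $\p^1$-bundle and take the fibers as the family). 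What is actually needed is that chains of such lines join two general points of $X$, or at least that the globally generated line bundle $\det N^*_{X/\p^N}(2)=-K_X-(n+1-c)H$, which has degree zero on the general line, be trivial on all of $X$; the hypothesis $n\geq c+2$ only gives $a(l)=n-1-c\geq 1$ and supplies neither connectivity nor cyclicity of the Picard group. This missing globalization is exactly the hard content, and \cite{IR3} obtains the conclusion by a different route, working with the second fundamental form at a general point, which for quadratic $X$ cuts out $\mathcal L_x\subset\p^{n-1}$ scheme-theoretically; in the extremal case $\dim\mathcal L_x=n-1-c$ this exhibits $\mathcal L_x$ as a complete intersection of $c$ quadrics and leads to $\ho(\I_X(2))=c$, whence $X$ is the complete intersection of those $c$ quadrics. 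Your closing step --- that $c$ quadrics generating $\I_X/\I_X^2$ everywhere cut out $X$ together with a possible residual component, which is then excluded by a dimension count in $\p^N$ using $n\geq c$ --- is correct, but without a proof of the global triviality the argument for (3) is incomplete.
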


\begin{proof}
(1) follows from the last point in the proposition. To see (2), recall that by the open nature of ampleness, we may assume that $l$ passes through the general point of $X$. So we may apply item (3) of the proposition, since \hbox{$\ho(N_{X/\p^N}^*(1)_{|l})=0$}. The last point follows from \cite{IR3}, proof of Theorem~3.8 (3), using the previous point.
\end{proof}

\begin{Remark}
If $X$ is quadratic and covered by lines, $N^*_{X/\p^N}(2)$ is spanned, so the splitting-type of the bundle $N_{X/\p^N}(-1)_{|l}$ is $(0,\ldots,0,1,\ldots,1)$, for any line $l$.
\end{Remark}

\section{Dual defective manifolds}

For $X\subset\p^N$, let $X^*\subset\p^{N*}$ be the dual variety of $X$ and let $\defect(X)=N-1-\dim(X^*)$ be the {\it dual defect} of $X$.

 Working in different settings, Mumford in \cite{Mum} and Landman (unpublished) called the attention on this very special but intriguing class of embedded manifolds. They have since then been studied thoroughly by Ein in \cite{Ein, Ein2} and by Beltrametti--Fania--Sommese in \cite{BFS}. See also \cite{LS, Munoz, Munoz2}.

For a hyperplane $H\subset\p^N$  we define the {\it contact locus} of $H$ on $X$ as
\begin{equation}\label{contactlocus}
L=L(H)=\{x\in X\mid T_xX\subseteq H\}=\Sing(X\cap H)\subset X.
\end{equation}
If $[H]\in X^*$  corresponds to a smooth point in $X^*$, then by {\it reflexivity} $L(H)\simeq \p^{\defect(X)}$ is an embedded linear subspace of $\p^N$ contained in $X$. Any line $l\subset L(H)$ with $[H]\in X^*$ 
is called a {\it contact line} on $X$. We recall a basic result on the geometry of contact lines proved differently by Ein in \cite[\S 2]{Ein}.

\begin{Proposition}{\rm (\cite{Ein})} \label{Eindual} 
Let $k=\defect(X)>0$ and let $l\subset L(H)$ be a contact line with $[H]\in X^*$ general. Then
$N_{l/X}\simeq\O_{\p^1}^{\frac{n-k}{2}}\oplus\O_{\p^1}(1)^{\frac{n+k-2}{2}};$
in particular, $n$ and $k$ have the same parity, as first proved by Landman.

\end{Proposition}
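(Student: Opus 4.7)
The plan is to compute $N_{l/X}$ by combining the embedded geometry of the contact locus $L(H)=\p^k$ inside $X$ with the standard normal-bundle exact sequence for a line in that linear space. The central algebraic gadget is a non-degenerate quadratic form on the normal bundle $N_{L(H)/X}$ with values in $\O_{L(H)}(1)$, arising from the fact that $L(H)$ is the (reduced) singular locus of $X\cap H$ with multiplicity exactly $2$.

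First I would fix a general $[H]\in X^*$, so by reflexivity $L(H)\simeq\p^k\subset X$ is a smooth linear subspace and the local equation $f\in H^0(X,\O_X(1))$ of $X\cap H$ lies in $\I_{L(H)/X}^2\otimes\O_X(1)$. Restricting modulo $\I_{L(H)/X}^3$ produces a well-defined element
\begin{equation*}
\overline{f}\in H^0\bigl(L(H),\,S^2 N^*_{L(H)/X}\otimes \O_{L(H)}(1)\bigr),
\end{equation*}
i.e., a symmetric form $q\colon S^2 N_{L(H)/X}\to\O_{L(H)}(1)$. The key geometric input is that for $[H]\in X^*$ smooth, $q$ is pointwise non-degenerate: at a general $x\in L(H)$ the kernel of the scalar second fundamental form $\nu_H\!\cdot\!II_x\colon S^2 T_xX\to\C$ is exactly $T_xL(H)$, because otherwise the contact locus would be strictly larger than $\p^k$. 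This is the main obstacle, but it is a classical consequence of biduality.

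Now I would restrict everything to $l\subset L(H)$. Using the inclusion $N_{L(H)/X}\hookrightarrow N_{L(H)/\p^N}=\O_{L(H)}(1)^{N-k}$, the restriction $N_{L(H)/X}|_l$ splits as $\bigoplus_{i=1}^{n-k}\O_l(a_i)$ with $a_i\leq 1$. The non-degenerate form $q|_l$ induces an isomorphism
\begin{equation*}
N_{L(H)/X}|_l\;\xrightarrow{\sim}\;N_{L(H)/X}|_l^{*}\otimes\O_l(1),
\end{equation*}
which forces the multisets $\{a_i\}$ and $\{1-a_i\}$ to coincide. Combined with the bound $a_i\leq 1$ (hence $a_i=1-a_j\geq 0$), one concludes $a_i\in\{0,1\}$ with equal multiplicities. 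Integrality of $(n-k)/2$ yields Landman's parity statement, and
\begin{equation*}
N_{L(H)/X}|_l\simeq \O_l^{(n-k)/2}\oplus\O_l(1)^{(n-k)/2}.
\end{equation*}

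Finally, I would plug this into the normal bundle sequence
\begin{equation*}
0\to N_{l/L(H)}\to N_{l/X}\to N_{L(H)/X}|_l\to 0,
\end{equation*}
where $N_{l/L(H)}=\O_l(1)^{k-1}$ because $L(H)=\p^k$. Since $\operatorname{Ext}^1(\O_l\oplus\O_l(1),\O_l(1))=0$, the sequence splits, giving
\begin{equation*}
N_{l/X}\simeq \O_l^{(n-k)/2}\oplus \O_l(1)^{(n-k)/2+k-1}=\O_l^{(n-k)/2}\oplus\O_l(1)^{(n+k-2)/2},
\end{equation*}
as desired. The entire argument is localized on $l$, so one never needs Grothendieck splitting on $\p^k$; the hardest point remains verifying that the transverse Hessian $q$ is everywhere non-degenerate for general $H\in X^*$.
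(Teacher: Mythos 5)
Your argument is essentially correct, but it is a genuinely different proof from the one in the paper --- in fact it is, in substance, Ein's original argument from \cite[\S 2]{Ein}, which the authors explicitly say they are avoiding (``proved differently by Ein''). The paper's proof is a pure degree count: it applies Lemma~\ref{norm} to the spanned bundle $E=N_{X/\p^N}(-1)$, viewing $l$ inside the fiber $L(H)\simeq\p^k$ of the conormal map $\p(E)\to\p^{N*}$ over the smooth point $[H]$, so that $l$ is orthogonal to the tautological bundle; this gives $\deg(N_{l/X})=(k-1)+(n-1-\deg(N_{l/X}))$, hence $\deg(N_{l/X})=\frac{n+k-2}{2}$, and the splitting type then follows because $l$ is free (summands $\geq 0$) and sits in $\p^N$ (summands $\leq 1$). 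What the paper's route buys is brevity and the complete avoidance of the Hessian; what your route buys is more structure, namely the self-duality $N_{L(H)/X}|_l\cong N_{L(H)/X}|_l^*\otimes\O_l(1)$, from which the splitting and the parity drop out without invoking freeness of $l$.

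Two points in your write-up deserve honesty about their status. First, for the isomorphism $N_{L(H)/X}|_l\xrightarrow{\sim}N_{L(H)/X}|_l^*\otimes\O_l(1)$ you need $q$ non-degenerate at \emph{every} point of $l$, not just at a general point of $L(H)$: if $\det q$ vanished on a hypersurface of $L(H)=\p^k$, a general line would still meet it, and you would only get an injection of sheaves, which does not force the splitting types to agree. Second, your parenthetical justification of non-degeneracy (``otherwise the contact locus would be strictly larger than $\p^k$'') is not a proof: a vector in the kernel of the Hessian at $x$ transverse to $T_xL(H)$ is an infinitesimal condition that need not integrate to an actual enlargement of $\Sing(X\cap H)$. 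The everywhere-non-degeneracy of the transverse Hessian along the contact locus of a general tangent hyperplane is a true classical fact (Katz, Landman, Ein), but it is precisely the nontrivial content being outsourced, so it should be cited as such rather than presented as obvious. With that reference supplied, the rest of your argument (the bound $a_i\leq 1$ from $N_{L(H)/X}\hookrightarrow\O_{L(H)}(1)^{N-k}$, the multiset identity $\{a_i\}=\{1-a_i\}$, and the split extension by $N_{l/L(H)}=\O_l(1)^{k-1}$) is complete and correct.
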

\begin{proof}
Apply Lemma \ref{norm} to a line $l\subset L(H)$, where $[H]\in X^*$ is general. Recall that the dual variety is obtained as the image of the natural map (given by the tautological line bundle) from the conormal variety $\p(N_{X/\p}(-1))$ to $\p^{N*}$. Seeing $l$ as contained in the linear $L(H)\simeq \p^k$ which is the fiber of that map, the degree of its normal bundle is $k-1$. Also, $\deg(N_{l/X})=a$ and $\deg(N_{X/\p}(-1)|_{l})=-\deg(N_{l/X}(-1))=n-a-1$, see \ref{normal}.
The result follows.
\end{proof}

Let $k=\defect(X)>0$ and let $x\in X$ be a general point. By Proposition~\ref{Eindual} there exists a unique irreducible component of $\mathcal L_x$ of dimension $\frac{n+k-2}{2}$ containing a given general contact line.
The union of all these irreducible components of $\mathcal L_x$ is called $C_x$. Since $\mathcal L_x$ is smooth and since $\frac{n+k-2}{2}\geq \frac{n-1}{2}$, there exists a unique
irreducible component of $\mathcal L_x$ containing all the general contact lines passing through $x$, i.e. $C_x\subseteq \mathcal L_x$ is an irreducible component of $\mathcal L_x$ of dimension
$\frac{n+k-2}{2}$ and
hence an irreducible smooth subvariety of $\mathcal L_x$. Easy examples like the Segre varieties $\p^1\times\p^{n-1}\subset\p^{2n-1}$ show that in general $C_x\subsetneq \mathcal L_x$.
For every line $[l]\in C_x$, we get $a(l)=\frac{n+k-2}{2}$.

\begin{Proposition}\label{contactline} Let $X\subset\p^N$ be as in {\rm($*$)} with  $\defect(X)=k>0$ and let $x\in X$ be a
general point. If $k\geq n-2c+2$, then for a general  line $[l]\in C_x$
 \[\dim\big(\big\langle {\textstyle \bigcup\limits_{x\in l}}T_xX\rangle)=\frac{3n-k}{2}\quad
\mbox{and}\quad
N_{X/\p^N}(-1)_{|l}\simeq\O_{\p^1}^{\frac{k+2c-n}{2}}\oplus\O_{\p^1}(1)^{\frac{n-k}{2}}.\]
Moreover,  every  line $[l]\in C_x$  is a contact line and for a general $[l]\in C_x$  every hyperplane $H$ such that
$l\subset L(H)$ represents a smooth point of $X^*$. In particular  through a general line $[l]\in C_x\subset\p^{n-1}$ there passes a linearly embedded $\p^{k-1}$
contained in $C_x$, corresponding to lines in $L(H)$ passing through $x$. 

\end{Proposition}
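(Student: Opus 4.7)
The plan is to deduce the four assertions from a single two‑way dimension count for an incidence variety, with the identity $h^0(N^*_{X/\p^N}(1)|_l)=(k+2c-n)/2$ as the central output; once this is in hand, Proposition~\ref{contact}(4) delivers both the splitting of $N_{X/\p^N}(-1)|_l$ and the dimension of $\langle\bigcup_{y\in l}T_yX\rangle$ at once.

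First, I would fix a general smooth point $[H_0]\in X^*$ and write $L_0:=L(H_0)\simeq\p^k$. By the very definition of $C_x$, a general $[l]\in C_x$ may be chosen as a general line through $x$ inside $L_0$, so in particular as a general contact line. A preliminary check from the exact sequence
\[0\to N_{l/L_0}\to N_{l/X}\to N_{L_0/X}|_l\to 0,\]
using $N_{l/L_0}\simeq\O_{\p^1}(1)^{k-1}$ and the splitting of $N_{l/X}$ from Proposition~\ref{Eindual}, yields $N_{L_0/X}|_l\simeq\O_{\p^1}^{(n-k)/2}\oplus\O_{\p^1}(1)^{(n-k)/2}$; this is not strictly needed but it motivates and cross-checks what follows.

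The heart of the argument is to count the incidence
\[I:=\{([l],[H])\in C_x\times X^*_{\mathrm{sm}}\mid l\subset L(H)\}\]
in two ways. Projecting $I$ to $X^*_{\mathrm{sm}}$, the image is dense open in the linear $\p^{c-1}=\{[H]\in\p^{N*}:T_xX\subset H\}\subset X^*$, and the fiber over a general such $[H]$ is the $\p^{k-1}$ of lines of $L(H)\simeq\p^k$ through $x$; this $\p^{k-1}$ lies in $C_x$ because its general member is a general contact line through $x$ and $C_x$ is closed irreducible. Hence $\dim I=(c-1)+(k-1)=c+k-2$. Projecting $I$ to $C_x$, the fiber over a general $[l]$ is a dense open subset of the linear space $\Lambda_l:=\{[H]\in\p^{N*}:l\subset L(H)\}$, whose projective dimension equals $h^0(N^*_{X/\p^N}(1)|_l)-1$. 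Equating the two counts forces
\[h^0(N^*_{X/\p^N}(1)|_l)=\tfrac{k+2c-n}{2},\]
which is exactly condition (i) of Proposition~\ref{contact}(4). The three‑way equivalence there then gives both $\dim\langle\bigcup_{y\in l}T_yX\rangle=(3n-k)/2$ and the claimed splitting of $N_{X/\p^N}(-1)|_l$. Note that the hypothesis $k\geq n-2c+2$ is precisely what makes $\dim\Lambda_l=(k+2c-n)/2-1$ nonnegative. The remaining assertions follow formally: that every $[l]\in C_x$ is a contact line is a properness argument applied to the closed incidence $\{([l],[H])\in C_x\times X^*:l\subset L(H)\}\to C_x$, whose image contains the dense locus of general contact lines and hence equals the irreducible $C_x$; and the linear $\p^{k-1}\subset C_x$ through a general $[l]$ is just the set of lines of $L_0$ through $x$, placed in $C_x$ by the same density‑and‑closedness argument.

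The main obstacle is to upgrade from ``$\Lambda_l\cap X^*_{\mathrm{sm}}$ is dense open in $\Lambda_l$'' (which the count above readily yields) to the stronger ``$\Lambda_l\subset X^*_{\mathrm{sm}}$'' for general $[l]\in C_x$. To handle this I would run the same two‑way count with $X^*_{\mathrm{sm}}$ replaced by $X^*_{\mathrm{sing}}$ and exploit the tightness of the hypothesis $k\geq n-2c+2$ to force the restricted projection to $C_x$ to be non-dominant. The delicate point is to control the fibre dimension at a singular $[H]$: there $L(H)$ need not be a linear $\p^k$ of the expected dimension, so counting lines through $x$ inside $L(H)$ requires a non‑trivial bound on $\dim L(H)$ beyond the generic reflexivity statement used for smooth $[H]$.
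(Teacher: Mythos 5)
Your overall strategy --- an incidence correspondence in $C_x\times (T_xX)^*$ counted in two ways, feeding the result into Proposition~\ref{contact}(4) --- is exactly the paper's, but there are two genuine gaps, one of which you acknowledge. The first is that you never prove the density statements on which your count rests. You need (i) that a general $[l]\in C_x$ lies in $L(H)$ for \emph{some} smooth $[H]\in X^*$, so that the fibre of $I\to C_x$ is a nonempty (hence dense) open subset of $\Lambda_l$; without this your count only recovers the inequality $h^0(N^*_{X/\p^N}(1)|_l)\geq \frac{k+2c-n}{2}$ already contained in Proposition~\ref{contact}(3), not the equality. You justify (i), and also the claim that every $[l]\in C_x$ is a contact line, by asserting that the general contact lines through $x$ are dense in $C_x$; but $C_x$ is only \emph{defined} as the irreducible component of $\mathcal L_x$ containing them, and a constructible subset of an irreducible variety need not be dense in it. The paper sidesteps this entirely with an argument you are missing: for \emph{every} $[l]\in C_x$ one has $a(l)=\frac{n+k-2}{2}$, so Proposition~\ref{contact}(1) together with the hypothesis $k\geq n-2c+2$ gives $\dim\langle\bigcup_{y\in l}T_yX\rangle\leq 2n-1-a(l)=\frac{3n-k}{2}\leq N-1$; hence the tangent spaces along any such $l$ lie in a hyperplane and every line of $C_x$ is a contact line. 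This makes $p_x:Z_x\to C_x$ (for the \emph{full} incidence, over all of $(T_xX)^*$) surjective with fibres the honest linear spaces $\Lambda_l=(\langle\bigcup_{y\in l}T_yX\rangle)^*$, and the fibre dimension count then yields the equality directly.

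The second gap is the one you flag as ``the main obstacle'': showing that for general $[l]$ \emph{every} $[H]$ with $l\subset L(H)$ is a smooth point of $X^*$. Your proposed fix (a second count over $\Sing(X^*)$ requiring a bound on $\dim L(H)$ at singular $[H]$) is left unexecuted and is not how the paper proceeds; the paper never analyses contact loci of singular hyperplanes. Instead it observes that the irreducible components of $Z_x$ dominating $C_x$ are those dominating $X^*_x$, hence generically lie over $\Sm(X^*)\cap X^*_x$, so $q_x^{-1}(\Sing(X^*))$ does not dominate $C_x$; properness of $p_x$ then produces an open $V\subseteq C_x$ with $p_x^{-1}(V)\subseteq q_x^{-1}(\Sm(X^*))$. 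As written, your proposal establishes (modulo the density issue) the two displayed formulas but not the final assertions of the Proposition.
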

\begin{proof} If $[l]\in C_x$, then $a(l)=\frac{n+k-2}{2}$
so that the hypothesis and  Proposition \ref{contact}
imply 
\begin{equation}\label{dimcont}
\dim\big(\big\langle {\textstyle \bigcup\limits_{x\in l}}T_xX\rangle)\leq 2n-1-a(l)\leq  N-1.
\end{equation}
Thus every  line $[l]\in C_x$  is contained in the contact locus of at least one hyperplane $H$ with $[H]\in (T_xX)^*$.

Let $X^*_x=\{[H]\in X^*\mid T_xX\subseteq H\}=(T_xX)^*\simeq\p^{c-1}$.  
Put
$$Z_x=\{([l],[H])\mid  l\subset L(H)\}\subset C_x\times X^*_x,$$
where $p_x:Z_x\to C_x$ is the restriction of the first projection and $q_x:Z_x\to X^*_x$ the restriction
of the second projection.  For every $[H]\in \Sm(X^*)\cap X^*_x$ we have  $q_x^{-1}([H])=\p^{k-1}$, where $\Sm(X^*)$ denotes the smooth locus of $X^*$. Thus 
$q_x$ is surjective and every irreducible component of $Z_x$ dominating $X^*_x$ has dimension $k+c-2$.
By \eqref{dimcont} also $p_x$ is surjective and by definition of $Z_x$ and $C_x$ every irreducible component
of $Z_x$ dominating $C_x$ dominates $X^*_x$ and vice versa. Let $U=q_x^{-1}( \Sm(X^*)\cap X^*_x)$. Since $p_x$ is proper
and surjective, there exists an open subset $V\subseteq C_x$ such that $p_x^{-1}(V)\subseteq U$. Thus for every $[l]\in V$,
every hyperplane $H$ such that $l\subset L(H)$ is a smooth point of $X^*$ and through $[l]$ there passes a linear embedded
$\p^{k-1}\subset\p^{n-1}$ contained in $C_x$.

Let $[l]\in C_x$ be an arbitrary line and let $F_{[l]}=p_x^{-1}([l])$. By definition 
$F_{[l]}=(\langle \bigcup_{x\in l}T_xX\rangle)^*=\p^{N-b(l)-1}$, where $b(l)=\dim(\langle \bigcup_{x\in l}T_xX\rangle)$.
Thus for a general $[l]\in C_x$
\begin{equation}\label{dimF}
\dim(F_{[l]})=k+c-2-\frac{n+k-2}{2}=\frac{k+2c-2-n}{2}, 
\end{equation}
and
\begin{equation}
n+c-b(l)-1=N-b(l)-1=\dim(F_{[l]})=\frac{k+2c-2-n}{2},
\end{equation}
yielding $b(l)=\dim (\langle \bigcup_{x\in l}T_xX\rangle) =\frac{3n-k}{2}=2n-1-a(l)$.

The decomposition $N_{X/\p^N}(-1)_{|l}\simeq\O_{\p^1}^{\frac{k+2c-n}{2}}\oplus\O_{\p^1}(1)^{\frac{n-k}{2}}$ follows now from Proposition \ref{contact} (4) and the rest is clear.
\end{proof}

\begin{Lemma}\label{3k}
Let $X\subset\p^N$ be as in {\rm($*$)} with  $\defect(X)=k>0$. If $k\geq\frac{n}{3}$,
then $k\geq n-2c+2$.
\end{Lemma}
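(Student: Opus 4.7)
The plan is to reduce the lemma to a short arithmetic manipulation via the standard codimension bound $k \leq c-1$.

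To produce this bound I will invoke Zak's theorem on tangencies (\cite[I.2.8]{Zak}, already used in the setup above for the finiteness of the Gauss map). For a general smooth point $[H] \in X^*$, reflexivity identifies the contact locus $L(H) \simeq \p^k$ with a linear subspace of $\p^N$ along which the hyperplane $H=\p^{N-1}$ is tangent to $X$, in the sense that $T_yX \subseteq H$ for every $y \in L(H)$. Zak's theorem bounds the dimension of any such locus of tangency of a linear $\p^m$ along a smooth non-degenerate $X\subset \p^N$ by $m-n$; applying it with $m=N-1$ gives
\[
k \;=\; \dim L(H) \;\leq\; (N-1) - n \;=\; c-1.
\]

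With $k\leq c-1$ in hand, the rest of the argument is pure arithmetic. This bound rewrites as $n-2c+2 \leq n-2k$, while the hypothesis $k\geq n/3$ is equivalent to $3k\geq n$, i.e.\ $k\geq n-2k$. Chaining the two inequalities,
\[
k \;\geq\; n-2k \;\geq\; n-2c+2,
\]
which is the desired conclusion.

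The only step of any substance is recognizing that Zak's theorem on tangencies produces the bound $k\leq c-1$; once this is available, the lemma follows by a one-line rearrangement, so I do not foresee any real obstacle.
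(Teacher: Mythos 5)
Your proposal is correct and follows the paper's proof exactly: both derive $k\leq c-1$ from Zak's Theorem on Tangencies and then conclude by the same one-line arithmetic rearrangement of $3k\geq n$. The extra detail you supply on why Zak's theorem yields $k\leq c-1$ (tangency of the hyperplane along the linear contact locus $L(H)$) is accurate and merely makes explicit what the paper leaves implicit.
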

\begin{proof} By Zak's Theorem on Tangencies, 
$k\leq c-1$. Then
$$n\leq 3k\leq k+2c-2,$$
concluding the proof.
\end{proof}

Let us recall that $X\subset \p^N$ is an {\it $r$-scroll}, if $X\simeq \p(E)$ where $E$ is a rank~$r+1$ vector bundle over some manifold $W$ and the fibers of the projection $\pi:\p(E)\to W$ are linearly embedded in $\p^N$. When $r > \dim(W)$, the scroll $X$ is dual defective and its defect equals $r-\dim(W)$. The next result characterizes scrolls among all dual defective manifolds. 

\begin{Theorem}\label{scrollimit}
Let $X\subset\p^N$ be as in {\rm($*$)} with  $\defect(X)=k>0$. Let $C_x\subset\p^{n-1}$ be as above and put
$\p^m\simeq T \subseteq\p^{n-1}$ to be the linear span of $C_x$ in $\p^{n-1}$. The following conditions are equivalent:
\begin{enumerate}
\item[{\rm (i)}] $X$ is an $\frac{n+k}{2}$-scroll over a manifold of dimension $\frac{n-k}{2}$;
\item [{\rm (ii)}] $\dim(C_x)>2\codim_T(C_x)$ (or equivalently $k>\frac{4m+6-3n}{3}$).
\end{enumerate}
\end{Theorem}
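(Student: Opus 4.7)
The plan is to prove the two implications separately. Throughout I will use the family $\mathcal F$ of contact lines; by Proposition~\ref{Eindual} this is a covering family with $a=\frac{n+k-2}{2}\geq \frac{n-1}{2}$, so Theorem~\ref{reduction} supplies a Mori contraction $\operatorname{cont}_{\mathcal F}\colon X\to W$ with general fiber $F$ of dimension $f$, such that $\Pic(F)=\Z\langle H_F\rangle$, $i(F)=a+2=\frac{n+k+2}{2}$, and the variety $\mathcal F_x\subset \p^{f-1}$ is smooth, irreducible and non-degenerate. Since $\mathcal F_x$ is irreducible of dimension $a=\dim C_x$ and contains $C_x$, one has $\mathcal F_x=C_x$; in particular $T=\p^{f-1}$, so $m=f-1$.

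For $(i)\Rightarrow(ii)$: if $X\simeq \p(E)\to W$ is an $\frac{n+k}{2}$-scroll, the linear fiber through a general point $x\in X$ is a $\p^{\frac{n+k}{2}}\subset X$, and the lines through $x$ in this fiber form a linearly embedded $\p^{\frac{n+k-2}{2}}\subset \mathcal L_x\subset \p^{n-1}$. Each of these lines has the splitting type prescribed by Proposition~\ref{Eindual}, so it lies in $C_x$, and by dimension this linear $\p^{\frac{n+k-2}{2}}$ equals $C_x$. Hence $T=C_x$ is linear, $\codim_T(C_x)=0$, and $(ii)$ is immediate.

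For $(ii)\Rightarrow(i)$: by the Kobayashi--Ochiai theorem, $i(F)\leq f+1$, with equality if and only if $F\simeq \p^f$. Substituting $i(F)=\frac{n+k+2}{2}$ yields $f\geq \frac{n+k}{2}$, with equality exactly when $F\simeq \p^f$, equivalently when $m=\dim C_x$, equivalently when the non-degenerate irreducible variety $C_x$ fills the linear space $T$. Once $F\simeq \p^f$ with $f=\frac{n+k}{2}$, the Mori contraction $\operatorname{cont}_{\mathcal F}$ realizes $X$ as a $\p^{\frac{n+k}{2}}$-bundle over $W$ with $\dim W=\frac{n-k}{2}$, which is the desired scroll structure. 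So the proof is reduced to showing that under hypothesis $(ii)$ one has $C_x=T$.

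For this final step, Proposition~\ref{contactline} asserts that through a general point of $C_x$ passes a linearly embedded $\p^{k-1}\subset C_x$, so $C_x\subset \p^m$ is an irreducible non-degenerate subvariety covered by a family of linear $\p^{k-1}$'s. The numerical inequality $\dim C_x>2\codim_T(C_x)$ in $(ii)$ is precisely the hypothesis that triggers the scroll criterion of \cite{BFS} applied to $C_x$, forcing $C_x=T$. This linearity step is the main obstacle: one must match the combinatorial condition in $(ii)$ with the precise statement of the \cite{BFS} criterion in terms of the dimension of the covering linear spaces and the codimension of $C_x$ in $T$. All the other ingredients---the Mori contraction, the identification $\mathcal F_x=C_x$, and Kobayashi--Ochiai---combine routinely to pass from linearity of $C_x$ to the scroll structure on $X$.
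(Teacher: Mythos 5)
Your setup is sound and matches the paper's strategy: the identification $\mathcal F_x=C_x$, $m=f-1$ via Theorem~\ref{reduction}, the easy direction (i)$\Rightarrow$(ii) (which the paper treats as obvious), and the reduction of (ii)$\Rightarrow$(i) to showing $C_x=T$. But the one step you flag as ``the main obstacle'' --- the linearity of $C_x$ --- is precisely where the real content of the theorem lies, and your sketch of it contains two genuine errors. First, you invoke Proposition~\ref{contactline} for $X$ itself to get covering $\p^{k-1}$'s inside $C_x$; that proposition requires $k\geq n-2c+2$, which does \emph{not} follow from hypothesis (ii) when $C_x$ is degenerate in $\p^{n-1}$ (i.e.\ when $f<n$). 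The paper instead applies everything to the general fiber $F\subseteq\langle F\rangle$: by \cite[Theorem (1.2)]{BFS} one has $\defect(F)=k(F)=k+n-f$, hypothesis (ii) translates exactly into $k(F)>\frac{f+2}{3}$, Lemma~\ref{3k} (applied to $F$) then validates the hypothesis of Proposition~\ref{contactline} for $F$, and the covering linear spaces have dimension $k(F)-1$, not $k-1$. Second, there is no ``scroll criterion of \cite{BFS}'' that forces $C_x=T$; the reference \cite{BFS} only supplies the defect formula above. The actual argument is: $C_x\subseteq\p^{f-1}$ is covered by linear spaces of dimension $k(F)-1>[\dim(C_x)/2]$, so by Sato's theorem \cite{Sato} (or \cite{BI}) $C_x$ is a scroll; then the codimension bound in (ii) together with the Barth--Larsen theorem \cite{BL} gives $\Pic(C_x)\simeq\Z\langle H_{C_x}\rangle$, which is incompatible with a nontrivial scroll structure, whence $C_x=\p^{f-1}=T$. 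Neither half of this two-step argument appears in your proposal.

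A smaller but still real gap: once $F\simeq\p^f$ with $f=\frac{n+k}{2}>\frac n2$, you assert that the Mori contraction ``realizes $X$ as a $\p^{f}$-bundle.'' Knowing that the \emph{general} fiber is a linear $\p^f$ does not by itself make the contraction a scroll (all fibers linear of the same dimension); the paper appeals to \cite[Theorem 1.7]{Ein2} for this. Your use of Kobayashi--Ochiai to relate $f=\frac{n+k}{2}$, $F\simeq\p^f$, and $C_x=T$ is a fine (and slightly more explicit) substitute for the corresponding step in the paper, but it only reshuffles the bookkeeping around the missing core.
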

\begin{proof} We only have to prove that (ii) implies (i). Since $n\equiv k$ modulo $2$, it follows that $\dim(C_x)=\frac{n+k-2}{2}\geq[\frac{n}{2}]$. In particular the lines in $C_x$ generate
an extremal ray of $X$ by Theorem~\ref{reduction}~(1). Let $\phi:X\to W$ be the contraction of this ray and let $F$ be a general fiber
of $\phi$. Then $F\subset\p^N$ is a smooth irreducible projective variety such that $\Pic(F)\simeq\mathbb Z\langle H_F\rangle$. Let $f=\dim(F)$ and let $\langle F\rangle$ be the linear span of $F$ in $\p^N$.
Then $C_x\subseteq \p((\mathbf T_xF)^*)=\p^{f-1}$ is smooth irreducible non-degenerate by Theorem~\ref{reduction}~(3).
Thus $m=f-1$. Moreover, by \cite[Theorem (1.2)]{BFS} we have $\defect(F)=k(F)=k+n-f$ so that the hypothesis in (ii) yields $k(F)>\frac{f+2}{3}$.
By Lemma \ref{3k} and Proposition \ref{contactline} every line in $C_x$ is a contact line for $F\subseteq \langle F\rangle$ and $C_x\subseteq\p^{f-1}$
is covered by linear spaces of dimension $k(F)-1>[\frac{\dim(C_x)}{2}]$. From \cite{Sato} (see also \cite{BI} for a simple proof in the spirit of the present paper) it follows that $C_x\subseteq \p((\mathbf T_xF)^*)=\p^{f-1}$ is
a scroll. The condition in (ii) and the Barth--Larsen Theorem, see \cite{BL}, give $\Pic(C_x)\simeq\mathbb Z\langle H_{C_x}\rangle$. So we get $C_x=\p^{f-1}$, hence $F=\p^f$ and $k(F)=f$. Therefore $f=\frac{n+k}{2}>\frac{n}{2}$ and  $X\subset\p^N$ is a scroll
by \cite[Theorem 1.7]{Ein2}.
\end{proof}

\begin{Corollary}\label{extremal} Let $X\subset\p^N$ be as in {\rm($*$)}, with  $\defect(X)=k>0$. Let $\phi:X\to W$ be the contraction whose existence is ensured by Theorem~{\rm\ref{reduction}~(1)}.
Assume that $X$ is not
a scroll. Then $$k\leq \frac{n+2-4\dim(W)}{3}\leq \frac{n+2}{3}.$$
Moreover:
\begin{enumerate}
\item $k=\frac{n+2}{3}$ if and only if\, $N=15$, $n=10$ and $X\subset\p^{15}$
is projectively equivalent to the $10$-dimensional spinorial variety $S^{10}\subset\p^{15}$.
\item If $\dim(W)>0$,  
then  $k=\frac{n+2-4\dim(W)}{3}$ if and only
if\, $\dim(W)=n-10\leq 3$ and $\phi:X\to W$ is a fibration such that the  
general fiber $F\subset\langle F\rangle \subset\p^N$ is isomorphic to  
$S^{10}\subset\p^{15}\subset\p^N$.

\end{enumerate}

\end{Corollary}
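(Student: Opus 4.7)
The plan is to read the first bound off the contrapositive of Theorem~\ref{scrollimit} and then reduce both equality cases to the prime Fano situation ($\dim(W)=0$), which ultimately amounts to identifying the VMRT $C_x$ with the Pl\"ucker-embedded Grassmannian $\G(1,4)\subset\p^9$.

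Since $X$ is not a scroll, condition (ii) of Theorem~\ref{scrollimit} fails. By Theorem~\ref{reduction}(3), $C_x$ is non-degenerate in $\p^{f-1}$, so the spanning dimension $m$ equals $f-1$ with $f=\dim(F)=n-\dim(W)$; hence
\[
\tfrac{n+k-2}{2}=\dim(C_x)\leq 2\codim_T(C_x)=2(f-1)-(n+k-2).
\]
Clearing denominators gives $3n+3k-2\leq 4f$, equivalently $k\leq\tfrac{n+2-4\dim(W)}{3}$, and the cruder bound $k\leq\tfrac{n+2}{3}$ follows at once.

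For (1), equality forces $\dim(W)=0$, so $X$ is a prime Fano manifold with $n=3k-2$ and $C_x\subset\p^{3k-3}$ is smooth, non-degenerate, of dimension $2(k-1)=2\codim(C_x)$. By Lemma~\ref{3k} and Proposition~\ref{contactline} every general line in $C_x$ is a contact line with a $\p^{k-1}\subset C_x$ through it, so $C_x$ is covered by linear $\p^{k-1}$'s, and Barth--Larsen yields $\Pic(C_x)\cong\Z$. At this threshold Sato's classification, used in the strict case of Theorem~\ref{scrollimit}, must be refined: the boundary case for a uniform covering by half-dimensional linear $\p^r$'s of a cyclic-Picard variety pins down $C_x$ projectively as $\G(1,4)\subset\p^9$. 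Hence $k=4$, $n=10$, and the Cartan--Fubini/VMRT recovery of Hwang--Mok identifies $X$ with $S^{10}\subset\p^{15}$.

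For (2), the general fiber $F$ has $\Pic(F)\cong\Z$ by Theorem~\ref{reduction}(2) and cannot be a scroll (else so would be $X$). By \cite[Theorem~1.2]{BFS} the embedded fiber satisfies $k(F)=k+n-f$, so the equality $k=\tfrac{n+2-4\dim(W)}{3}$ translates into $k(F)=\tfrac{f+2}{3}$. Applying part~(1) to $F\subset\langle F\rangle$ forces $F\simeq S^{10}\subset\p^{15}$, hence $f=10$, $k(F)=4$, $\dim(W)=n-10$ and $k=4-\dim(W)$; the positivity $k>0$ then yields $\dim(W)\leq 3$. The converses in both (1) and (2) are direct checks using $k(S^{10})=4$ together with the BFS formula.

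The principal obstacle is the boundary classification step in~(1): once one knows that the unique non-scroll option for the half-dimensional uniformly linearly covered VMRT $C_x$ is $\G(1,4)$, the Hwang--Mok recognition of $X=S^{10}$ from its VMRT and the reduction of (2) to (1) are formal. Absent a ready-made statement, this step would proceed by inspecting the possible cyclic-Picard $Y^{2r}\subset\p^{3r}$ covered by linear $\p^r$'s (an extension of Sato's theorem into the equality case) or by appealing to the classification of cyclic-Picard DD manifolds of small codimension from \cite{Ein,Ein2}.
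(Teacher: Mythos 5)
Your derivation of the bound $k\leq\frac{n+2-4\dim(W)}{3}$ from the contrapositive of Theorem~\ref{scrollimit}, and your reduction of part (2) to part (1) via the Beltrametti--Fania--Sommese formula $k(F)=k+n-f$, both match the paper. The problem is part (1), where you yourself flag ``the boundary classification step'' as the principal obstacle and then do not carry it out: you assert that the boundary case of a half-dimensional uniform linear covering ``pins down $C_x$ projectively as $\G(1,4)\subset\p^9$'' without any argument, and you offer only speculative fallbacks. This is precisely the heart of the paper's proof, and it is not formal. The paper invokes the Novelli--Occhetta refinement \cite{ON} of Sato's theorem, which at the equality threshold $k-1=\frac{\dim(C_x)}{2}$ yields a genuine trichotomy for $C_x\subset\p^{n-1}$: a scroll, an even-dimensional quadric hypersurface, or $\G(1,r)$ ($r\geq 4$) possibly isomorphically projected. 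One must then dispose of the first two cases (both force $n=4$ and lead back to $X$ being a scroll over a curve, contradicting the hypothesis), pass to $n\geq 7$ so that Barth--Larsen gives $\Pic(C_x)\cong\Z$ and excludes the scroll case, and finally pin down $r=4$ by a secant-defect computation: since $\delta(\G(1,r))=4$, nondegeneracy forces $n-1\geq 2\dim(C_x)+1-\delta(C_x)=\frac{4n-13}{3}$, i.e.\ $n\leq 10$, while $\dim(C_x)=2(r-1)\geq 6$ gives $n\geq 10$. None of this appears in your write-up, and without it the conclusion $k=4$, $n=10$ is unsupported. You also omit the small-$n$ case $n<7$, where Barth--Larsen is unavailable and a separate argument is needed.

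A secondary divergence: once $C_x\cong\G(1,4)\subset\p^9$ is established, you propose to recognize $X$ as $S^{10}$ via Hwang--Mok's VMRT recognition, whereas the paper observes that $F=X$ (so $\Pic(X)\cong\Z\langle H\rangle$, $i(X)=n-2=8$) and concludes by Mukai's classification of prime Fano manifolds of coindex $3$. Your route is plausible in principle since $S^{10}$ is an irreducible Hermitian symmetric space, but Mukai's classification delivers the projective equivalence (including $N=15$) more directly; if you go the VMRT route you still owe an argument identifying the embedding, not just the abstract variety.
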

\begin{proof} Keeping the notation from the preceding theorem, we have  $k\leq\frac{4m+6-3n}{3}$, $m=f-1$ and $f=n-\dim(W)$. So we may assume that $k=\frac{n+2}{3}$. 
Then $\dim(C_x)=\frac{2(n-1)}{3}$. By Lemma \ref{3k} and
Proposition \ref{contactline} $C_x\subset\p^{n-1}$ is covered by linear spaces of dimension $k-1=\frac{n-1}{3}=\frac{\dim(C_x)}{2}$. Then by \cite{Sato} and \cite{ON}
the variety $C_x\subset\p^{n-1}$ is one of the following:
\begin{enumerate}
\item[{\rm a)}] a scroll;
\item[{\rm b)}] a quadric hypersurface of even dimension;
\item[{\rm c)}] $\mathbb G(1,r)$ Pl\" ucker embedded with $r\geq 4$, or one of its isomorphic projections.
\end{enumerate}

If $n<7$, then $n=4$ and $k=n-2=\dim(C_x)$; reasoning as in the proof of
Theorem \ref{scrollimit}, $X\subset\p^n$ would be a scroll over a curve, contradicting our
assumption. If $C_x\subset\p^{n-1}$ is a (quadric) hypersurface, then $\frac{2(n-1)}{3}=n-2$ yields
$n=4$, obtaining once again a contradiction. Thus we can suppose $n\geq 7$, so that $\Pic(C_x)=\mathbb Z\langle H\rangle$ by the Barth-Larsen Theorem. Therefore
we are necessarily  in case c). Since the secant defect of $\mathbb G(1,r)$ is four, we get:
$n-1\geq 2\dim(C_x)+1-\delta(C_x)=\frac{4n-13}{3}$, yielding $n\leq 10$. On the other hand, we have $\dim(C_x)=2(r-1)\geq 6$, so that $n\geq 10$. Therefore $n=10$, $k=4$ and $i(X)=8$; moreover, the proof of the previous theorem shows that $F=X$, i.e. $\Pic(X)\simeq\mathbb Z\langle H\rangle$. The conclusion of part (1) follows by Mukai's
classification of prime Fano manifolds of index $n-2$, see \cite{Mukai}.

To prove (2), assume that $k=\frac{n+2-4\dim(W)}{3}$. By \cite[Theorem (1.2)]{BFS}
$$k(F)=\frac{n+2-4\dim(W)}{3}+\dim(W)=\frac{f+2}{3},$$
so that by the first part $f=10=n-\dim(W)$ and $F\subset\langle F 
\rangle \subset\p^N$ is isomorphic to $S^{10}\subset\p^{15}$.
\end{proof}

\begin{Remark} The preceding corollary improves one of the main results in \cite{Ein2} stating that if $k>0$ and $X$ is not a scroll, we have $k\leq \frac{n-2}{2}$, with equality only if $X$ is projectively isomorphic either to $\mathbb G(1,4)\subset \p^9$ or to $S^{10}\subset \p^{15}$. If $n\leq 9$ Ein's bound is better than ours; these cases may be recovered by our method, too. 
\end{Remark}

\section{DD versus LQEL manifolds}

We begin by recalling the main known results about LQEL manifolds; complete proofs of the statements in the next theorem may be found in \cite{Ru, IR, IR2}.
\begin{Theorem}\label{LQEL} Assume $X\subset \mathbb{P}^N$ is a LQEL manifold of secant defect $\delta$. Then:

\begin{enumerate}
\item $X$ is a Fano rational manifold with ${\rm rk Pic}(X)\leq 2$.

\item If ${\rm rk Pic}(X)=2$ then $X$ is one of:

\begin{enumerate}
\item $\mathbb{P}^a\times \mathbb{P}^b$ in its Segre embedding, or
\item the hyperplane section of the above, or
\item the blowing-up of $\p^n$ with center a linear space $L$, embedded by the linear system of quadrics through $L$.
\end{enumerate}

\item If ${\rm rk Pic}(X)\!= \!1$, $X\!\cong\! v_2(\mathbb{P}^n)$ or ${\rm Pic}(X)\! = \!\mathbb{Z} \langle H\rangle$ and $i(X) = \frac {n+\delta}{2}$.

\item If $\delta \geq 3$ then $\mathcal{L}_x \subset \mathbb{P}^{n-1}$ is again a LQELM, $S\mathcal{L}_x= \mathbb{P}^{n-1}$, $\dim (\mathcal{L}_x)= \frac{n+\delta}2 -2$, $\delta (\mathcal{L}_x) =\delta -2$. If $\delta \geq \frac n2$, a complete classification is obtained.

\item $X$ is a complete intersection if and only if $X$ is a quadric $\mathbb{Q}^n (\delta=n)$.

\end{enumerate}
\end{Theorem}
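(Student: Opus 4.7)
The plan is to exploit the family of $\delta$-dimensional quadrics $Q_{x,x'}\subset X$ through two general points, together with a VMRT-type inductive structure, proceeding through the five items in turn.

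For (1), I would first note that fixing a general $x\in X$ and letting $x'$ vary sweeps out $X$ by quadrics through $x$; projecting from $x$ produces a dominant map to $\p((\mathbf T_xX)^*)$ with rational fibers, so $X$ is rational by induction on $\delta$. The Fano property follows by intersecting $-K_X$ with a line $\ell\subset Q_{x,x'}$ and applying Mori's theorem to the covering family $\mathcal F$ of such lines. The Picard rank bound uses the Mori contraction $\mathrm{cont}_{\mathcal F}$ supplied by Theorem~\ref{reduction}(1), plus an analysis of any second extremal ray, which must again come from a covering family of lines (a conic would split into two lines inside one of the quadrics $Q_{x,x'}$), forcing at most one additional ray.

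For (3), when $\Pic(X)=\Z\langle H\rangle$, I would compute $N_{\ell/X}$ for a line $\ell$ on $Q=Q_{x,x'}$ through $x$ by an argument parallel to Proposition~\ref{Eindual}: the pencil of lines on the smooth quadric $Q$ through $x$ contributes $\O_{\p^1}(1)^{\delta-1}$ to $N_{\ell/X}$ and the remaining summands are trivial, yielding $a=\deg N_{\ell/X}=\frac{n+\delta}{2}-2$ and hence $i(X)=a+2=\frac{n+\delta}{2}$. The Veronese $v_2(\p^n)$ appears as the degenerate case where $\delta=n$ but $X$ is not itself a quadric. For (2), when $\Pic(X)$ has rank two, the two extremal contractions either both give covering families of lines (producing a product $\p^a\times\p^b$ or, after an ample hyperplane intersection, its hyperplane section) or one is divisorial, contracting a blown-up linear subspace as in case (c).

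The heart of the argument is (4): given general $[\ell_1],[\ell_2]\in\mathcal L_x$, pick $y_i\in\ell_i$ near $x$ and form $Q_{y_1,y_2}$; the pencil of lines on $Q_{y_1,y_2}$ through $x$, as $y_1,y_2\to x$, sweeps out a $(\delta-2)$-dimensional quadric in $\mathcal L_x$ joining $[\ell_1]$ and $[\ell_2]$. The equality $S\mathcal L_x=\p^{n-1}$ then follows from the dimension count $2\dim\mathcal L_x+1-(\delta-2)=n-1$. Iterating this inductive reduction, the classification when $\delta\geq n/2$ boils down to a short list of base cases for $\mathcal L_x$. Item (5) is immediate: if a complete intersection $X$ has $\delta>0$, then combining the index formula from (3) with the adjunction formula for complete intersections forces $X$ to be a quadric.

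The main obstacle is the rigorous execution of the inductive step in (4): one must verify that the limit quadric inside $\mathcal L_x$ is smooth and non-degenerate in its span, which requires careful deformation theory of the universal family of quadrics over an open set of $X\times X$ and relies on the smoothness of $\mathcal L_x$ from Theorem~\ref{reduction}(3) once the index has been pinned down. The low-$\delta$ cases ($\delta\leq 2$) require separate treatment via the conic-connected-manifold analysis of \cite{IR2}.
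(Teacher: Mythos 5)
First, a caveat about the comparison: the paper does not prove Theorem~\ref{LQEL} at all. It introduces it with the sentence ``complete proofs of the statements in the next theorem may be found in \cite{Ru, IR, IR2}'', so this is a survey statement whose proof occupies substantial parts of three earlier papers. Your proposal is therefore being measured against those references rather than against anything internal to this paper; judged on that basis it does identify the correct overall strategy (conic-connectedness, the Mori contraction of the covering family of lines, the inductive structure on $\mathcal L_x$), but several individual steps do not work as written.

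Concretely: (a) for rationality in (1), a dominant map with rational fibers onto a rational base does not imply rationality of the source (non-rational conic bundles over rational surfaces exist), so your one-line deduction is a genuine gap; moreover the projection from a point $x$ lands in $\p^{N-1}$, not in $\p((\mathbf T_xX)^*)\cong\p^{n-1}$, so the map you invoke is not the one you name. (b) In (3), the lines of the smooth quadric $Q_{x,x'}\cong Q^\delta$ through $x$ form a family of dimension $\delta-2$ (not a ``pencil'', and not $\delta-1$), and in any case this only gives $\deg N_{\ell/X}\geq\delta-2$; the required equality $\deg N_{\ell/X}=\frac{n+\delta}{2}-2$ (equivalently $-K_X\cdot C=n+\delta$ for a general conic) is precisely the nontrivial content of \cite{Ru} and cannot be read off from the single quadric $Q_{x,x'}$ --- note that your own arithmetic $\delta-1=\frac{n+\delta}{2}-2$ forces $\delta=n-2$. (c) The Veronese $v_2(\p^n)$ has $\delta=1$ (its secant variety is the locus of rank $\leq 2$ symmetric matrices, of dimension $2n$), not $\delta=n$; it is exceptional in (3) because $H=2H_{\p^n}$ does not generate $\Pic(X)$, not because of a degenerate value of $\delta$. (d) Items (2) and (4) of your plan essentially restate the conclusions of \cite{IR2} (classification of conic-connected manifolds of Picard number two) and of \cite{Ru, IR} (the LQEL structure on $\mathcal L_x$ and the classification for $\delta\geq n/2$); as you yourself acknowledge for (4), the existence, smoothness and non-degeneracy of the limit quadric inside $\mathcal L_x$ is the technical heart of the matter and is left open. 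By contrast, your argument for (5), combining $i(X)=\frac{n+\delta}{2}$ with adjunction for complete intersections and the inequality $\delta\geq n-c+1$, is sound once (3) is granted.
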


\begin{Proposition}
Assume $X\subset \mathbb{P}^N$ is a LQEL manifold of secant defect $\delta$, different from $\mathbb{Q}^n$. Then $\delta \leq c+1$. 
\end{Proposition}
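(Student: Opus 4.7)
I would split the argument along the Picard number, using Theorem~\ref{LQEL}. If $\mathrm{rk\,Pic}(X)=2$, Theorem~\ref{LQEL}(2) gives three explicit types and in each case the inequality is a short computation; for instance, for the Segre $\mathbb{P}^a\times\mathbb{P}^b$ one has $\delta=2$ and $c=ab$, so $\delta\leq c+1$ reduces to $ab\geq 1$, and the hyperplane section and blow-up along a linear subspace are handled analogously. If $\mathrm{rk\,Pic}(X)=1$, Theorem~\ref{LQEL}(3) either gives $X=v_2(\mathbb{P}^n)$, where $\delta=1\leq c+1$ is immediate, or $i(X)=(n+\delta)/2$, and then the claim is equivalent to $i(X)\leq (n+c+1)/2$.

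For this main case I would induct on $\delta$ via the variety of lines $\mathcal{L}_x\subset\mathbb{P}^{n-1}$. The base cases $\delta\leq 2$ follow from $c\geq 1$. For $\delta\geq 3$, Theorem~\ref{LQEL}(4) identifies $\mathcal{L}_x$ as an LQEL of defect $\delta-2$ and codimension $(n-\delta+2)/2$ in $\mathbb{P}^{n-1}$; moreover $\mathcal{L}_x$ is not a quadric, since $\delta(\mathcal{L}_x)=\dim\mathcal{L}_x$ would force $\delta=n$, i.e. $X=\mathbb{Q}^n$. The inductive hypothesis then yields
\[
\delta-2 \leq \frac{n-\delta+2}{2}+1,
\]
hence $\delta\leq (n+8)/3$.

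To upgrade this to $\delta\leq c+1$ I would split on whether $SX=\mathbb{P}^N$. If $SX\neq\mathbb{P}^N$, Zak's theorem on linear normality gives $c\geq n/2+2$, so $c+1\geq (n+6)/2 \geq (n+8)/3 \geq \delta$ and we are done. If $SX=\mathbb{P}^N$, then $\delta=n+1-c$ and the target becomes $n\leq 2c$; the inductive bound gives $n\leq (3c+5)/2$, which already forces $n\leq 2c$ as soon as $c\geq 5$. The hard part is the low-codimension range $c\leq 4$ with $SX=\mathbb{P}^N$: here I would use that the parity constraint $n\equiv\delta\pmod 2$ eliminates $c=2$ outright, while the very few remaining triples $(n,c,\delta)$ are ruled out through Fujita's and Mukai's classifications of prime Fano manifolds of low codimension, together with $X\neq\mathbb{Q}^n$.
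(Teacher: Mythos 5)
Your route is genuinely different from the paper's. The paper's proof is four lines: writing $a=\dim(\mathcal L_x)=\frac{n+\delta-4}{2}$, it first shows $a\geq n-c$ (otherwise $n-c+1\leq\delta\leq n-2c+2$, forcing $c=1$ and hence $X=\mathbb Q^n$), and then quotes Proposition~3.1 of \cite{MMT}, which gives $a\leq\frac{n+c-3}{2}$ for manifolds covered by lines with $a\geq n-c$; substituting $\delta=2a+4-n$ yields $\delta\leq c+1$ at once. You instead run a strong induction on $\delta$: the inductive hypothesis applied to $\mathcal L_x$ gives Fu's bound $\delta\leq\frac{n+8}{3}$ --- which the paper deduces \emph{from} this Proposition as Corollary~\ref{delta}, so you are traversing the same circle of ideas in the opposite order --- and you then upgrade to $\delta\leq c+1$ via Zak's linear normality theorem when $SX\neq\p^N$ and via $\delta=n+1-c$ when $SX=\p^N$. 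The logic is sound: the induction is well-founded because the rank-two and $v_2(\p^n)$ cases and the base case $\delta\leq 2$ are settled unconditionally (so the inductive hypothesis may legitimately be invoked for $\mathcal L_x$ whatever its Picard rank), your check that $\mathcal L_x$ is not a quadric is correct, and the numerical estimates ($c\geq\frac{n}{2}+2$ when $SX\neq\p^N$, and $n\leq\frac{3c+5}{2}\leq 2c$ for $c\geq 5$) all hold. What your approach buys is independence from \cite{MMT}; what it costs is the low-codimension endgame.

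That endgame is the one genuine soft spot: you assert rather than prove that the remaining triples are excluded, so it should be made explicit. In the range $SX=\p^N$, $c\leq 4$, $\delta=n+1-c\geq c+2$, $\delta\geq 3$: parity kills $c=4$ as well as $c=2$; $c=1$ gives $\delta=n$, hence $X=\mathbb Q^n$, which is excluded; and combining $\delta=n-2$ with $\delta\leq\frac{n+8}{3}$ leaves the single triple $(n,c,\delta)=(7,3,5)$, i.e.\ a del Pezzo $7$-fold with ${\rm Pic}(X)=\mathbb Z\langle H\rangle$ and $i(X)=6$, nondegenerate in $\p^{10}$. Fujita's classification gives degree $d\leq 5$, hence $h^0(H)=n+d-1\leq 11$ with equality only for linear sections of $\G(1,4)\subset\p^9$, which have dimension at most $6$ --- a contradiction. (Alternatively, $\delta=5\geq n/2$ places $X$ on the explicit list quoted in Theorem~\ref{LQEL}(4), which contains no such variety.) With that paragraph supplied your proof is complete, though noticeably longer and heavier than the paper's.
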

\begin{proof}
Let $a:=\dim(\mathcal{L}_x)$ and recall that $a=\frac{n+\delta -4}2$. We claim that $a\geq n-c$. Indeed, if not, we would have $n-c+1\leq \delta \leq n-2c+2$, so $c=1$, a contradiction. Thus, we may apply \cite{MMT}, Proposition 3.1, giving $a\leq \frac{n+c-3}2$. As $\delta=2a+4-n$, the conclusion follows.
\end{proof} 
\begin{Corollary}{\rm (cf. \cite[Remark 3.3]{Ru})}\label{fu}
The Hartshorne Conjecture on complete intersections, see \cite{Ha}, holds for LQEL manifolds.

\end{Corollary}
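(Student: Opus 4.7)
The plan is to combine the preceding Proposition with an elementary dimension-count for the secant variety and with Theorem~\ref{LQEL}(5). The point of the latter is that, within the class of LQEL manifolds, being a complete intersection is equivalent to being the quadric $\mathbb Q^n$. Hence it suffices to show that an LQEL manifold $X \subset \p^{n+c}$ satisfying the Hartshorne hypothesis $n > 2c$ is necessarily a quadric.

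First I would observe that $SX \subseteq \p^{n+c}$, so $\dim(SX) \leq n+c$, and the definition of the secant defect immediately gives
\[
\delta \;=\; 2n+1-\dim(SX) \;\geq\; n-c+1.
\]
Under the Hartshorne hypothesis $n>2c$ this upgrades to the strict inequality $\delta \geq n-c+1 > c+1$. Now I would assume, for contradiction, that $X$ is not a quadric. The preceding Proposition would then force $\delta \leq c+1$, contradicting the bound just derived. Hence $X \cong \mathbb Q^n$, a degree-two hypersurface, and in particular a complete intersection, so the Hartshorne Conjecture is verified for $X$.

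I anticipate no genuine obstacle here: all the real work is already carried out in the preceding Proposition, while the lower bound $\delta \geq n-c+1$ is just a dimension count for $SX \subseteq \p^{n+c}$. The only mild care is to match the precise form of the Hartshorne Conjecture being invoked (say $n > 2c$, versus the slightly weaker $n \geq 2c+1$); any of the standard formulations is strong enough to make $n-c+1$ strictly exceed $c+1$ and thereby trigger the contradiction.
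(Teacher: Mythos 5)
Your argument is correct and is essentially the paper's own proof, just phrased contrapositively: both combine the trivial lower bound $\delta=2n+1-\dim(SX)\geq n-c+1$ with the preceding Proposition's upper bound $\delta\leq c+1$ (for $X$ not a quadric) to conclude $n\leq 2c$ unless $X\cong\mathbb Q^n$. The appeal to Theorem~\ref{LQEL}(5) is harmless but not needed, since one only requires that a quadric is a complete intersection.
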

\begin{proof}
We have $n-c+1\leq \delta \leq c+1$, unless $X$ is a quadric. It follows that $n\leq 2c$.
\end{proof} 
We also get a new proof of a bound on $\delta$ due to B. Fu, \cite{Fu}.
It expresses the fact that the Hartshorne Conjecture holds for the variety of lines of a LQEL manifold (see \cite{IR3}).
 
\begin{Corollary}{\rm (\cite{Fu})}\label{delta} 
 Assume $X\subset \p^N$ is a LQEL manifold of secant defect $\delta$, different from $\mathbb{Q}^n$. Then $\delta \leq \frac{n+8}3$.
\end{Corollary}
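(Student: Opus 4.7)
\medskip

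The plan is to apply the previous Proposition (giving $\delta\leq c+1$ for non-quadric LQEL manifolds) not to $X$ itself but to the variety of lines $\mathcal{L}_x\subset \mathbb{P}^{n-1}$ through a general point, using the fact from Theorem~\ref{LQEL}~(4) that $\mathcal{L}_x$ is again LQEL (when $\delta\geq 3$) with known dimension and secant defect. In other words, the inductive structure of LQEL under passage to the variety of minimal rational tangents should propagate the codimension bound into the sharper bound $\delta\leq \frac{n+8}{3}$.

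First I would dispose of the range $\delta\leq 2$, where the inequality $\delta\leq \frac{n+8}{3}$ is automatic for $n\geq 1$. So assume $\delta\geq 3$. By Theorem~\ref{LQEL}~(4), $\mathcal{L}_x\subset \mathbb{P}^{n-1}$ is a LQEL manifold with
\[
\dim(\mathcal{L}_x)=\frac{n+\delta-4}{2},\qquad \delta(\mathcal{L}_x)=\delta-2,
\]
so its codimension in $\mathbb{P}^{n-1}$ is
\[
c(\mathcal{L}_x)=(n-1)-\frac{n+\delta-4}{2}=\frac{n-\delta+2}{2}.
\]

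Next I would verify that $\mathcal{L}_x$ is not a quadric. If it were, then by Theorem~\ref{LQEL}~(5) applied to $\mathcal{L}_x$, we would have $\delta(\mathcal{L}_x)=\dim(\mathcal{L}_x)$, i.e.\ $\delta-2=\frac{n+\delta-4}{2}$, which forces $\delta=n$; but then $X$ itself would be a quadric by Theorem~\ref{LQEL}~(5), contradicting the hypothesis.

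Finally I would apply the previous Proposition to the non-quadric LQEL manifold $\mathcal{L}_x$, obtaining $\delta(\mathcal{L}_x)\leq c(\mathcal{L}_x)+1$, that is
\[
\delta-2\leq \frac{n-\delta+2}{2}+1,
\]
which rearranges to $3\delta\leq n+8$. There is no real obstacle here; the only place one needs to be careful is the exclusion of the boundary case $\mathcal{L}_x\cong \mathbb{Q}^{\dim \mathcal{L}_x}$, which is exactly where the previous proposition fails and which corresponds precisely to $X$ itself being a quadric.
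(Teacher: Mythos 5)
Your proof is correct and follows essentially the same route as the paper: reduce to $\delta\geq 3$, use Theorem~\ref{LQEL}~(4) to pass to the LQEL manifold $\mathcal{L}_x\subset\p^{n-1}$, rule out the quadric case, and apply the codimension bound to $\mathcal{L}_x$. The only cosmetic difference is that you invoke the Proposition ($\delta\leq c+1$) directly, while the paper invokes Corollary~\ref{fu} ($n\leq 2c$) applied to $\mathcal{L}_x$; since $S\mathcal{L}_x=\p^{n-1}$ these two inequalities coincide for $\mathcal{L}_x$ and yield the same bound $3\delta\leq n+8$.
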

\begin{proof}
We may assume $\delta \geq 3$. From \cite{Ru}, Theorem (2.3), it follows that $\mathcal{L}_x \subset \p^{n-1}$ is also a LQEL manifold. Moreover, it is easy to see that  $\mathcal{L}_x$ is a quadric only if $X$ is. By the preceding Corollary applied to $\mathcal{L}_x\subset \p^{n-1}$, we get $a\leq 2(n-1-a)$, where $\dim \mathcal{L}_x=a=\frac{n+\delta-4}2$. The bound on $\delta$ follows.
\end{proof}
\begin{Conjecture}{\rm(\cite{IR2})} Assume $X$ is a LQELM with ${\rm Pic}(X) \cong \mathbb{Z}\langle H\rangle$. Then $X$ is obtained by linear sections and/or isomorphic projections from a rational homogeneous manifold, in its natural minimal embedding.\end{Conjecture}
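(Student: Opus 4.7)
The natural attack is inductive, exploiting the recursive feature of LQEL manifolds established in Theorem~\ref{LQEL}(4): when $\delta\geq 3$, the variety of lines $\mathcal{L}_x\subset\p^{n-1}$ is itself a LQEL manifold, of dimension $\frac{n+\delta-4}{2}$ and secant defect $\delta-2$. The plan is first to verify that $\mathcal{L}_x$ inherits cyclic Picard group: by Corollary~\ref{delta} one has $\delta\leq\frac{n+8}{3}$, so $\dim\mathcal{L}_x \geq \frac{n-1}{2}$ for $n$ large, and the Barth--Larsen theorem applied to $\mathcal{L}_x\subset\p^{n-1}$ forces $\Pic(\mathcal{L}_x)\simeq\Z\langle H_{\mathcal{L}_x}\rangle$. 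The inductive hypothesis then gives $\mathcal{L}_x$ as a linear section and/or isomorphic projection of a rational homogeneous $G/P$ in its minimal embedding.

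The base cases $\delta\leq 2$ (plus the exceptional $v_2(\p^n)$ appearing in Theorem~\ref{LQEL}(3)) should be handled directly. For $\delta=2$ the quadric $Q_{x,x'}$ joining two general points is a reducible union of two lines meeting in a point, which puts strong constraints on the contact geometry and allows one to match $X$ against the Mukai classification of prime Fano manifolds of coindex $\leq 3$. For $\delta=1$ (where $i(X)=\frac{n+1}{2}$), one combines the scroll characterization Theorem~\ref{scrollimit}, the dual-defect bound in Corollary~\ref{extremal}, and classical classification of prime Fanos of middle index to produce the short list of homogeneous models.

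The inductive step proper would then use Hwang--Mok's theory of variety of minimal rational tangents: Cartan--Fubini extension, together with Tanaka prolongation of the infinitesimal symmetry algebra of $\mathcal{L}_x\subset\p((\mathbf{T}_xX)^*)$, should recover a Lie group $G'\supset\mathrm{Aut}(\mathcal{L}_x)$ acting transitively on $X$, hence identify $X$ biregularly with a rational homogeneous $G'/P'$ of Picard number $1$. The condition $i(X)=\frac{n+\delta}{2}$, together with the known indices of rational homogeneous spaces of Picard number $1$, pins down which $G'/P'$ can occur (the Hermitian symmetric spaces, Severi varieties, Grassmannians of lines, spinorial $S^{10}$, and Cayley plane $E_6/P_1$, with their linear sections).

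The main obstacle is the passage from the biregular identification $X\cong G'/P'$ to a projective statement of the form \emph{linear section and/or isomorphic projection of the minimal homogeneous embedding}. VMRT and Cartan--Fubini give only biregular data, whereas the conclusion concerns the embedding $X\subset\p^N$. One must therefore show the projective rigidity of minimal homogeneous embeddings among LQEL embeddings: namely, that the only complete linear systems on $G'/P'$ whose image is LQEL are those contained in the fundamental one. This amounts to showing that the defining quadrics of $G'/P'\subset\p(V_\omega)$ are already forced by the quadratic entry locus condition on $X$, and is the step where no uniform technique is currently available; it would likely have to be argued case by case using the fact that $X$ is quadratic (its defining equations are cut out by quadrics through the entry loci $Q_{x,x'}$) together with the known projective normality and rigidity of the homogeneous models.
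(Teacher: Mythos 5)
The statement you are trying to prove is stated in the paper as a \emph{Conjecture} (attributed to \cite{IR2}); the paper offers no proof of it, and to the best of anyone's knowledge it is open. So there is no argument of the authors to compare yours against, and what you have written is not a proof either: you yourself concede that the passage from a biregular identification $X\cong G'/P'$ to the projective conclusion (``linear section and/or isomorphic projection of the minimal embedding'') has ``no uniform technique currently available.'' A proposal that ends by naming the step it cannot carry out is a research plan, not a proof.

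Beyond the conceded gap, several of the steps you do assert would fail. First, Barth--Larsen applied to $\mathcal{L}_x\subset\p^{n-1}$ gives $\Pic(\mathcal{L}_x)\cong\Z$ only when $2\dim(\mathcal{L}_x)\geq (n-1)+2$, i.e.\ $\frac{n+\delta-4}{2}\geq\frac{n+1}{2}$, which forces $\delta\geq 5$; your claim that $\dim\mathcal{L}_x\geq\frac{n-1}{2}$ suffices is not what the theorem says, so the induction already breaks at $\delta=3,4$. Second, the induction is structurally mismatched with the conclusion: the inductive hypothesis only gives that $\mathcal{L}_x$ is a linear section and/or projection of a homogeneous space, whereas the Cartan--Fubini/recognition machinery of Hwang--Mok requires the VMRT to be the actual VMRT of a specific homogeneous model and, even when it applies, concludes that $X$ is homogeneous --- but the conjecture's conclusion explicitly allows non-homogeneous $X$ (proper linear sections and isomorphic projections), so no argument that outputs ``$X\cong G'/P'$'' can be the right inductive step. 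Third, the base cases are themselves open: LQEL manifolds with cyclic Picard group and $\delta=1$ or $2$ are not classified (Theorem~\ref{LQEL}(4) gives a complete classification only for $\delta\geq\frac{n}{2}$), and your description of $Q_{x,x'}$ for $\delta=2$ as a union of two lines is incorrect --- it is a quadric surface, since $\dim Q_{x,x'}=\delta$. Each of these is a substantive obstruction, not a detail to be routinely filled in.
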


We recall that (linearly normal) rational homogeneous manifolds are well understood. In particular, those which are secant defective are known to be LQEL manifolds and are completely classified, see \cite{Ka, Zak}. They turn out to be quadratic manifolds and moreover we have that $\delta \leq 8$ if $X$ is not a quadric. 

\medskip

\begin{Conjecture}\label{DD}{\rm(\cite{IR2})} Any dual defective manifold with cyclic Picard group is a LQELM.
\end{Conjecture}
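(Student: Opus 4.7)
\emph{Setup.} Let $X \subset \p^N$ be dual defective with $\Pic(X) = \Z\langle H\rangle$ and $\defect(X) = k > 0$. By Ein, $i(X) = \frac{n+k+2}{2}$, so $X$ is covered by lines, and the irreducible family $C_x$ of contact lines through a general point $x$ has dimension $\frac{n+k-2}{2} \geq \frac{n-1}{2}$; Theorem~\ref{reduction} applies, and since $\Pic(X)$ is cyclic, the associated Mori contraction is trivial, so $F=X$ in its notation. By Theorem~\ref{LQEL}(3), an LQELM with cyclic Picard group, different from $v_2(\p^n)$ and from $\Q^n$, satisfies $i(X) = \frac{n+\delta}{2}$; matching the indices we are therefore aiming for an LQEL structure with $\delta = k+2$.

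\emph{Strategy.} The plan has three steps. First, verify the mild hypothesis $k \geq n - 2c + 2$ of Proposition~\ref{contactline}: by Lemma~\ref{3k} it suffices to show $k \geq n/3$. Since cyclic Picard rules out the scroll case, Corollary~\ref{extremal} already gives $k \leq \frac{n+2}{3}$; combining this with Zak's bound $c \geq k+1$ and Mukai-type classifications of Fano manifolds of large index, one aims for the matching lower bound. Once it is granted, Proposition~\ref{contactline} gives that every $[l] \in C_x$ is a contact line and
$$N_{X/\p^N}(-1)|_l \simeq \O_{\p^1}^{(k+2c-n)/2}\oplus\O_{\p^1}(1)^{(n-k)/2}.$$
Second, deduce that $X$ is quadratic: the dual twist $N^*_{X/\p^N}(2)|_l$ is globally generated on every such line, and via the universal family of contact lines and semicontinuity one propagates global generation of $N^*_{X/\p^N}(2)$ to all of $X$, i.e., $X$ is cut out scheme-theoretically by quadrics. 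Third, build the LQEL structure: for general $x, x' \in X$, take the irreducible component through $x$ and $x'$ of $X \cap \langle T_xX \cap T_{x'}X,\, x, x'\rangle$ as a candidate entry locus; the quadraticity of $X$ from step~2 and the Terracini-type arguments in \cite{Ru, IR} should force this component to be a smooth $(k+2)$-dimensional quadric, yielding $X$ as a LQELM.

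\emph{Main obstacle.} The most delicate step is (2): jumps of the splitting type on special (non-general) lines can obstruct the propagation of global generation of $N^*_{X/\p^N}(2)$, so a cohomological vanishing input or a careful analysis of the evaluation map on the universal contact line is likely required. A secondary difficulty is establishing the lower bound $k \geq n/3$ in step~(1) without appealing to a full classification of high-index Fanos. Finally, in step~(3), showing that the candidate entry locus is actually a smooth quadric of the expected dimension does not follow from quadraticity alone and will require additional normal-bundle computations along the quadric itself; an alternative route worth keeping in reserve is to prove instead that the VMRT $C_x\subset\p^{n-1}$ is itself LQEL (as predicted by Theorem~\ref{LQEL}(4) with $\delta(C_x)=k$) and lift that structure back to $X$.
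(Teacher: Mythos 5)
This statement is a \emph{conjecture} in the paper, not a theorem: the authors do not prove it, and your proposal does not close the gap either. What the paper does offer is a reduction, stated right after the conjecture: Conjecture~\ref{DD} follows from the conjunction of (1) $X$ is conic-connected and (2) $k\geq n-c-1$, since by \cite[Proposition~3.2]{IR} these two together imply the LQEL property. Your three-step strategy is a genuinely different (and more ambitious) route, but each of its steps contains an unproved claim that is essentially as hard as the conjecture itself, so what you have is a research programme rather than a proof.

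Concretely: in Step~1 you propose to verify the hypothesis $k\geq n-2c+2$ of Proposition~\ref{contactline} by first establishing $k\geq n/3$. But $k\geq n/3$ is strictly stronger than what is needed, it is not supplied by Corollary~\ref{extremal} (which bounds $k$ only from \emph{above}), and it is not even a consequence of the conjecture: part (2) of the paper's reduction gives only $k\geq n-c-1$, which is compatible with $k<n/3$ when $c$ is large relative to $k$. The hypothesis $k\geq n-2c+2$ is exactly the ``mild condition (which is, conjecturally, automatically fulfilled)'' flagged in the introduction --- it is itself open, and no Mukai-type classification in the range $i(X)=\frac{n+k+2}{2}$ with $k$ small is available to force it. In Step~2 you run the implication backwards: the Remark at the end of Section~2 says that \emph{if} $X$ is quadratic \emph{then} $N^*_{X/\p^N}(2)$ is spanned and every line has splitting type $(0,\dots,0,1,\dots,1)$; knowing the splitting type of $N_{X/\p^N}(-1)|_l$ on general contact lines does not propagate to global generation of $N^*_{X/\p^N}(2)$ on $X$, much less to $X$ being scheme-theoretically an intersection of quadrics. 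Step~3, as you concede, does not follow from quadraticity alone. If you want to make progress in the direction the paper suggests, the statements to aim for are conic-connectedness and the single inequality $k\geq n-c-1$, which together already yield the conclusion via \cite{IR}.
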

 
 Theorem~(1.2) from \cite{BFS}, based on Theorem~\ref{reduction}~(1), reduces the study of dual defective manifolds to the case when $\Pic(X)\simeq\mathbb Z\langle H\rangle$. Note that $k\leq 4$ in all known examples other than scrolls.

The Conjecture~\ref{DD} may be cut into two parts, stated as follows:

\begin{enumerate}
\item any dual defective manifold with cyclic Picard group is conic-connected, 
and
\item a dual defective manifold $X\subset \p^N$ as in {\rm($*$)} with cyclic Picard group and dual defect $k$, satisfies:

$k\geq n-c-1$.
\end{enumerate}

Note that if $X$ is both DD and LQELM, with cyclic Picard group, the formula for $\dim(\mathcal{L}_x)$ gives $\delta=k+2$. So, the condition in (2) corresponds to the (obvious) inequality $\delta \geq n-c+1$.
Moreover, from \cite[Proposition~3.2]{IR} it follows that (1) and (2) together imply that $X$ is a LQELM.

\begin{Remarks} 
\begin{enumerate}
\item Condition (2) above implies the inequality $k\geq n-2c+2$, which is the hypothesis in Proposition~\ref{contactline} (use that $n\equiv k$ modulo $2$).
\item By Zak's Theorem on Tangencies, we have $k\leq c-1$, which combined with the inequality in (2) yields $n\leq 2c$ for dual defective manifolds. This would prove that DD manifolds satisfy the Hartshorne Conjecture. Note that all known linearly normal DD manifolds (with cyclic Picard group) are quadratic, so satisfy the Hartshorne Conjecture, as proved in \cite{IR3}.
\item Knowing that (2) holds would lead to a much simpler proof of Corollary~\ref{extremal}, without making use of the elaborate results from \cite{Sato, ON}.

\item If $X$ has cyclic Picard group and is both dual defective and a LQELM, we have seen that $\delta=k+2$; therefore, the upper bounds on $\delta$ in Corollary~\ref{delta}, and on $k$ in Corollary~\ref{extremal}, are the same. They express the condition that $$\dim(\mathcal{L}_x)\leq 2\codim(\mathcal{L}_x, \p^{n-1}),$$ 
that is the Hartshorne condition for the variety of lines.

\end{enumerate}
\end{Remarks}

\end{document}